\newtheorem{thm}{Theorem}[section]
\newtheorem{lem}[thm]{Lemma}
\newtheorem{pro}[thm]{Proposition}
\theoremstyle{remark}
\newtheorem{remark}[thm]{Remark}
\theoremstyle{ass}
\newtheorem{ass}[thm]{Assumption}
\newcommand{\R}{\mathbb{R}}
\newcommand{\Rd}{\mathbb{R}^{d}}
\newcommand{\D}{\nabla_{A}}
\newcommand{\C}{\mathcal{C}^{*}}
\numberwithin{equation}{section}
\begin{document}

\title[Helmholtz equation with magnetic potential]{Limiting absorption principle for the electromagnetic Helmholtz equation with singular potentials}

\author{Miren Zubeldia}

\address{M. Zubeldia: Universidad del Pa\'is Vasco, Departamento de Matem\'aticas, Apartado 644, 48080, Spain}
\email{miren.zubeldia@ehu.es}

\thanks{The author is supported by the Spanish grant FPU AP2007-02659 of the MEC}

\begin{abstract} We study the following Helmholtz equation 
\begin{equation}\label{abstract}
(\nabla +iA(x))^{2} u+ V_{1}(x) u + V_{2}(x) u  + \lambda u  = f(x)\notag
\end{equation}
in $\Rd$ with magnetic and electric potentials that are singular at the origin and decay at infinity. We prove the existence of a unique solution satisfying a suitable Sommerfeld radiation condition, together with some a priori estimates. We use the limiting absorption method and a multiplier technique of Morawetz type.
\end{abstract}

\date{\today}

\subjclass[2010]{35J05, 35J10, 35J15.}
\keywords{%
electric potentials, magnetic potentials, Helmholtz equation, Sommerfeld condition}

\maketitle

\section[Introduction]{Introduction}
Let us consider the electromagnetic Schr\"odinger operator
\begin{equation}\notag
L=\sum_{j=1}^{d} (\nabla_{j} + iA_{j})^{2} + V
\end{equation}
in the Hilbert space $L^{2}(\Rd)$, $d\geq 3$. Here $A: \Rd \to \Rd$ is the magnetic vector potential and $V:\Rd \to \R$ is the electric scalar potential. We are interested in studying solutions of the equation
\begin{equation}\label{introISequ}
(L+\lambda) u = f, \quad \quad \lambda >0
\end{equation}
where $f$ is a suitable function on $\Rd$.

The standard covariant form of the electromagnetic Schr\"odinger hamiltonian is
\begin{equation}\notag
L = \D^{2} + V
\end{equation}
with
\begin{equation}\label{1.4}
\D = \nabla + i A.
\end{equation}
The magnetic potential $A$ describes the interaction of a free particle with an external magnetic field. The magnetic field that corresponds to a magnetic potential $A$ is given by the $d \times d$ anti-symmetric matrix defined by
\begin{equation}\label{rotacional}
B=(DA)-(DA)^{t}, \quad B_{kj}=\left(\frac{\partial A_{k}}{\partial x_{j}}- \frac{\partial A_{j}}{\partial x_{k}}\right) \quad k,j=1,\ldots,d.
\end{equation}
In geometric terms, it is given by the 2-form $dA$ as 
\begin{equation}\notag
dA = \sum_{k,j = 1}^{d} B_{kj}\, dx^{k}\wedge dx^{j}.
\end{equation}
In dimension $d=3$, $B$ is uniquely determined by the vector field $curl \, A$ via the vector product
\begin{equation}\notag
B v = curl \, A \times v, \quad \forall v \in \R^{3}.
\end{equation}
We also define the trapping component of $B$ as
\begin{equation}\label{tangen0}
B_{\tau}(x) = \frac{x}{|x|}B(x), \quad \quad \quad \quad (B_{\tau})_{j} = \sum_{k=1}^{d}\frac{x_{k}}{|x|} B_{kj}
\end{equation}
and we say that $B$ is non-trapping if $B_{\tau}=0$. Observe that in dimension $d=3$ it coincides with 
$$
B_{\tau}(x) := \frac{x}{|x|} \times curl \, A(x).
$$
Hence, $B_{\tau}(x)$ is the projection of $B$ on the tangential space in $x$ to the sphere of radius $|x|$, for $d=3$. Observe also that $B_{\tau} \cdot x = 0$ for any $d\geq 2$, therefore $B_{\tau}$ is a tangential vector field in any dimension and we call it the tangential component of the magnetic field $B$. 

In the sequel, we deal with potentials which vanish at infinity and are possibly singular at the origin. More precisely, we decompose the electric potential as
$$
V= V_{1} + V_{2},
$$
where $V_{1}$ is a long range potential and $V_{2}$ is a short range one which is possibly singular. Regarding to the magnetic part, some analogous conditions will be required for the magnetic field $B$, the quantity which is physically measurable. However, in order to ensure the self-adjointness of $L$ we need to require some local integrability condition on the magnetic potential $A$. From now on, we always assume that
\begin{equation}\label{assumptionself}
A_{j}\in L^{2}_{loc}(\Rd), \quad \quad V_{1}, V_{2} \in L^{1}_{loc}(\Rd), \quad \quad\int (V_{1}+V_{2})|u|^{2} \leq \nu\int |\nabla u|^{2}, \quad 0<\nu<1 .
\end{equation}
Thus it may be concluded (see \cite{Zu}, chapter 1 for more details) that $L$ is self-adjoint on $L^{2}(\Rd)$ with form domain
\begin{equation}\notag
D(L) = \{f\in L^{2}(\Rd) : \int |\D f|^{2} - \int (V_{1} + V_{2})|f|^{2} <\infty \}.
\end{equation} 
Note that by (\ref{assumptionself}) then $D(L)$ is equivalent to the Hilbert space 
$$
H_{A}^{1}(\Rd) = \{f\in L^{2}(\Rd) : \int |\D f|^{2} < \infty\}.
$$
As a consequence, since the spectrum of a self-adjoint operator is real, we obtain the existence of solution of the equation
\begin{equation}\label{resepsilon}
Lu_{\varepsilon} + (\lambda \pm i\varepsilon)u_{\varepsilon} =f
\end{equation}
in $\Rd$ for any $f\in L^{2}(\Rd)$ and $u_{\varepsilon}$ belonging to $H_{A}^{1}(\Rd)$. See \cite{IK}, \cite{LS}, \cite{AHS} or \cite{CFKS} for more details in the investigation of the essential self-adjointness of the electromagnetic Schr\"odinger operator $L$. 

Under suitable assumptions on the potentials, our goal is to prove that there exists a unique solution of the resolvent equation 
\begin{equation}\label{res}
(\nabla + iA)^{2}u +V_{1}u + V_{2}u + \lambda u =f, \quad \quad \lambda >0
\end{equation}
satisfying a specific Sommerfeld radiation condition together with some a priori estimates of Agmon-H\"ormander type. We will construct this solution $u$ from the solution of the equation (\ref{resepsilon}). In fact, $u$ will be the limit of $u_{\varepsilon}$ in a suitable space, that we will denote by
\begin{equation}\notag
u= R(\lambda + i0)f = \lim_{\varepsilon \to 0^{+}} u_{\varepsilon}.
\end{equation}
We point out that we need two main ingredients for this purpose. On the one hand, the a priori estimates and Sommerfeld radiation condition for any solution $u_{\varepsilon}\in H^{1}_{A}(\Rd)$ of (\ref{resepsilon}) will be needed. On the other hand, we shall assert uniqueness of solution of the equation (\ref{res}) if such a radiation condition is satisfied.

It is a simple matter to show the uniqueness result for (\ref{resepsilon}). Letting $f=0$, we only need to multiply the corresponding equation by $u_{\varepsilon}$ in the $L^{2}$-sense and take the imaginary part. Thus we get $\varepsilon\Vert u_{\varepsilon}\Vert^{2} =0$ and so $u_{\varepsilon}=0$. Uniqueness criterion for the equation (\ref{res}) presents a more delicate problem. In this case, we shall study the homogeneous electromagnetic Helmholtz equation
\begin{equation}\label{homogeneo}
(\nabla + iA)^{2}u + \left(V_{1} + V_{2}\right)u + \lambda u = 0
\end{equation}
and show that if $u\in (H^{1}_{A})_{loc}(\Rd)$ is a solution of $(\ref{homogeneo})$, then $u$ is identically zero. The proof of this result is adapted from \cite{M1} or \cite{Z}. Nevertheless, as far as we know, it does not seem to appear in the literature for potentials as the one we can treat. Using the multiplier method we prove that $u=0$ in $\Omega = \{x\in\Rd : |x|\geq R\}$ for $R>0$ large enough. Then we apply the unique continuation property to deduce that $u$ vanishes in $\Rd$. Hence, in order to accomplish this task, we need that the unique continuation property holds for $L$. 

Regbaoui \cite{R} proves that if $u\in H^{1}_{loc}(\Rd)$ satisfies
\begin{equation}\label{P(x,D)}
|P(x,D)u| \leq C_{1}|x|^{-2}|u| + C_{2}|x|^{-1}|\nabla u|,
\end{equation}
with $C_{2}>0$ small enough and $P(x,D)= \sum_{j,k=1}^{d}a_{jk}D_{j}D_{k}$ is an elliptic operator with Lipschitz coefficients such that $a_{jk}(0)$ is real in a connected open subset $\Omega$ of $\Rd$ containing $0$, then $u\equiv0$ in $\Omega$. Thus for using this result, we will write the magnetic Schr\"odinger operator $L$ as a first order perturbation of the Laplacian,
\begin{equation}\notag
L = \Delta  + 2i A\cdot\nabla  + i\nabla\cdot A  - A\cdot A + V_{1} + V_{2}
\end{equation}
and note that $u$ satisfies
\begin{equation}\label{laplreg}
|\Delta u + \lambda u| \leq 2|A||\nabla u| + \left(|\nabla \cdot A| + |V_{1}| + |V_{2}| + |A|^{2}\right)|u|.
\end{equation}

The crux of the limiting absorption principle are certain $L^{2}$-weighted a priori estimates for the operator $(L+ z)^{-1}$, $z=\lambda + i\varepsilon$, such that are preserved after the limiting procedure. The classical result on the free resolvent case, which is usually denoted by
\begin{equation}\notag
R_{0}(z) = (\Delta +z)^{-1},
\end{equation}
is due to Agmon \cite{A} and states that the limits
\begin{equation}\notag
R_{0}(\lambda \pm i0) = \lim_{\varepsilon \to 0} R_{0}(\lambda \pm i\varepsilon)
\end{equation}
exist in the norm of bounded operators from $L^{2}_{s}(\Rd)$ to $L^{2}_{-s}(\Rd)$ for any $s>1/2$, where
\begin{equation}\notag
\Vert u \Vert_{L^{2}_{s}} = \Vert (1+|x|)^{s}u\Vert_{L^{2}}.
\end{equation}
The convergence is uniform for $\lambda$ belonging to any compact subset of $]0+\infty[$, and the following estimate holds
\begin{equation}\label{Agmon}
\Vert R_{0}(\lambda \pm i0)f\Vert_{L^{2}_{-s}} \leq \frac{C(s)}{\sqrt{\lambda}} \Vert f \Vert_{L^{2}_{s}}, \quad \lambda>0, s>1/2.
\end{equation}
From this, it may be concluded that $u_{\pm}=R_{0}(\lambda \pm i0)f$ is the unique solution of the equation
\begin{equation}\notag
\Delta u_{\pm} + \lambda u_{\pm} = f
\end{equation}
satisfying
\begin{equation}\notag
\lim_{|x|\to +\infty} |x|^{\frac{d-1}{2}}\left(\frac{\partial u_{\pm}}{\partial |x|} \mp iku_{\pm} \right) = 0.
\end{equation}
$u_{+}= R_{0}(\lambda + i0)f$ is called the outgoing solution, while $u_{-}=R_{0}(\lambda - i0)f$ denotes the incoming one.

Later on, Agmon and H\"ormander \cite{AH} showed that estimate (\ref{Agmon}) held with the $L^{2}_{\delta}$ norms replaced by the norms
\begin{equation}\notag
|||u|||_{R_{0}} := \sup_{R>R_{0}} \left(\frac{1}{R}\int_{|x|\leq R} |u(x)|^{2}dx\right)^{1/2}
\end{equation}
and
\begin{equation}\notag
N_{R_{0}}(f) : = \sum_{j> J}\left(2^{j+1}\int_{C(j)} |f(x)|^{2}dx \right)^{1/2} + \left( R_{0}\int_{|x|\leq R_{0}} |f(x)|^{2}dx\right)^{1/2} 
\end{equation}
with $R_{0}=1$, where $C(j)=\{ x\in\Rd : 2^{j-1}\leq |x|\leq 2^{j}\}$ and $J$ is defined by $2^{J-1}\leq R_{0}\leq2^{J}$. 

The norms $|||u|||_{1}$ and $N_{1}(f)$ are known as Agmon-H\"ormander norms. We drop the index $R_{0}$ if $R_{0}=0$, getting then the Morrey-Campanato norm and its dual,
\begin{align}
& |||u|||:= \sup_{R>0} \left(\frac{1}{R}\int_{|x|\leq R} |u(x)|^{2}dx \right)^{1/2}\notag\\
& N(f):= \sum_{j\in \mathbb{Z}}\left( 2^{j+1}\int_{C(j)} |f(x)|^{2}dx\right)^{1/2}.\notag
\end{align}
Note that for all $R_{0} \geq 0$, it is satisfied
\begin{align}
\int fg &\leq \int_{|x|\leq R_{0}} |f||g| +  \sum_{j> \log_{2} R_{0}}\left[2^{j}\int_{C(j)} |f|^{2}\frac{1}{2^{j}}\int_{C(j)}|g|^{2} \right]^{1/2}\notag\\
&\leq |||g|||_{R_{0}}N_{R_{0}}(f).\notag
\end{align}

The Agmon-H\"ormander estimate was improved by Kenig, Ponce and Vega \cite{KPV} to the Morrey-Campanato norm in their study of the nonlinear Schr\"odinger equation. In fact, they proved
\begin{equation}\label{kpv}
\lambda^{1/2}|||u|||  \leq CN(f).
\end{equation}
This estimate plays a fundamental role in solving Schr\"odinger evolution equations with nonlinear first order terms.

The seminal papers by Agmon and H\"ormander \cite{A}, \cite{AH}, inspired a huge literature (see for example \cite{A}, \cite{AH}, \cite{Be}, \cite{Ik}, \cite{Is}, \cite{Mou} \cite{MU}, \ldots ) which has been produced in order to obtain weighted $L^{2}$-estimates for solutions of Helmholtz equations. Moreover, the classical work of Agmon \cite{A} shows the limiting absorption principle for short range perturbations of $\Delta$. Fourier analysis is involved as a crucial tool in the proofs strategy; however, Fourier transform does not permit in general to treat neither rough potentials nor the case in which the same problems are settled in domains that are different from the whole space. For this reason, a great effort has been spent in order to develop multiplier methods which work directly on the equation, inspired by the techniques introduced by Morawetz \cite{MO} for the Klein-Gordon equation.

Resolvent estimates for $\Delta + V$ with coefficients with very low regularity and such that $V$ does not vanish at infinity have been proved by Perthame and Vega \cite{PV1}, \cite{PV2}. The authors study the Helmholtz equation in an inhomogeneous medium of refraction index $n(x) = \lambda + V(x)$, generalizing the estimate (\ref{kpv}) to a variable case by using a multiplier method with appropriate weights as those used for the wave, Schr\"odinger or kinetic equations by Morawetz \cite{MO}, Lin and Strauss \cite{LS1} or Lions and Perthame \cite{LP}, respectively. We point out that the estimates are uniform for any $\lambda \geq 0$ and have the right scaling. Similar results but not scaling invariant were obtained in \cite{JP} and \cite{Zh1}. The scaling plays a fundamental role in the applications to nonlinear Schr\"odinger equation \cite{KPV} and in the high frequency limit for Helmholtz equations \cite{BCKP}, \cite{CPR}.

For the electromagnetic case, several papers are devoted to the study of the existence of a unique solution of the electromagnetic Helmholtz equation
\begin{equation}\label{electro}
(\nabla + iA(x))^{2} u + V(x) u + \lambda u = f(x), \quad x\in \Rd.
\end{equation}
The first result goes back to the work of Eidus \cite{E} in 1962, where it is showed that there exists a unique solution $u(\lambda, f)$ of the equation (\ref{electro}) in $\R^{3}$ with the radiation condition
\begin{equation}\notag
\lim_{r\to \infty} \int_{|x|=r} \left|\frac{\partial u}{\partial|x|}-i\lambda^{1/2}u \right|^{2} d\sigma(r)=0.
\end{equation}
Here $A_{j}(x)$ is assumed to vanish close to infinity and the electric potential satisfies $V(x)=O(|x|^{-2-\alpha})$ with $\alpha > \frac{1}{6}$ at infinity. 

In 1972, Ikebe and Saito \cite{IS} extend the above result to any $d\geq 3$ for potentials $V$ that are the sum of a long range potential $V_{1}$, being $V_{1}(x) = O(|x|^{-\mu})$, $\frac{\partial V_{1}}{\partial |x|} = O(|x|^{-1-\mu})$ at infinity and a short-range potential $V_{2}$ such that $V_{2}(x) = O(|x|^{-1-\mu})$, for $\mu >0$ when $|x|\to \infty$. Concerning the magnetic part, they require that $A_{j}\in C^{1}(\Rd)$ such that each component of the magnetic field holds $|B_{kj}|\leq C(1+|x|)^{-1-\mu}$ for some $C>0$, $\mu > 0$. By integration by parts they solve the electromagnetic Helmholtz equation (\ref{introISequ}) in a $L^{2}$-weighted space with the spherical radiation condition
\begin{equation}\label{IkebeSaito}
\int_{|x|\geq 1} \left|\nabla_{A} u - i\lambda^{1/2}\frac{x}{|x|}u\right|^{2} \frac{1}{(1+|x|)^{1-\delta}} < +\infty
\end{equation}
and a weighted $L^{2}$ a priori estimate
\begin{equation}\label{u00}
\int \frac{|u|^{2}}{(1+|x|)^{1+\delta}}   < +\infty,
\end{equation}
where $0< \delta < 1$ is a fixed constant. They require that the frequency $\lambda$ vary in a compact set $(\lambda_{0}, \lambda_{1})$ with $0<\lambda_{0} < \lambda_{1} < \infty$. This condition is essential to the justification of the compactness argument that they use in order to get (\ref{u00}). In \cite{IS} it is crucial to be far away from the zero frequency and the bounds they obtain are not uniform with respect to $\lambda \in[\lambda_{0,}\infty)$.

The literature about resolvent estimates related to the magnetic Schr\"odinger operator is more extensive. We are mainly interested in giving a priori estimates for solutions $u$ of the equation (\ref{introISequ}) imposing conditions on the trapping component of the magnetic field $B$, instead of on the magnetic potential $A$. The quantity $B_{\tau}$ was introduced by Fanelli and Vega \cite{FV} in which it is proved that weak dispersion for the magnetic Schr\"odinger and wave equation holds, for example, for non-trapping potentials, i.e., $B_{\tau}=0$. This is also what happens in the stationary case, as it is shown in \cite{F}. Following \cite{PV1}, Fanelli generalizes the uniform a priori estimate (\ref{kpv}) to the magnetic case. This estimate has several consequences about the so called Kato smoothing effects for solutions of the evolutions problems which in general do not hold for long range potentials (see among others \cite{BRV}, \cite{DF}, \cite{Ka4}, \cite{KaYa}, \cite{LP}, \cite{LS1}). The uniform resolvent estimate
\begin{equation}\notag
\int \frac{|u|^{2}}{|x|^{2}} \leq C\int |x|^{2}|f|^{2}
\end{equation}
also plays a fundamental role for dispersive estimates on the time dependent Schr\"odinger operator, as for the study of the Strichartz estimates for the Schr\"odinger equation with electromagnetic potential, see for example \cite{DFVV}, \cite{FG}, \cite{M2}, \cite{M3}.

In this paper we are able to strongly improve the result by Ikebe and Saito \cite{IS}, inspired by the multiplier technique introduced in \cite{PV1}. Let us consider the inhomogeneous Helmholtz equation
\begin{equation}
(\nabla +iA)^{2}u + V_{1}u + V_{2}u + \lambda u +i\varepsilon u = f,  \label{2.1}
\end{equation}
where $\lambda, \varepsilon > 0$ and $f$ is a suitable function on $\Rd$. We work with potentials that decay at infinity and can have singularities at a point that we will take to be at the origin. We will use a multiplier method based on radial multipliers. Thus just information for the tangential component of the magnetic field $B$ (see Remark \ref{Btauremark}) will be needed. Nevertheless, in order to assert the unique continuation property, it is necessary to put some restrictions on the whole $B$ when we are close to the origin. 

One question still unanswered is whether the unique continuation property is satisfied assuming only the decay on the tangential part of $B$. We will not develop this point here, but we propose to study it in the future. We should mention here that a partial result for $B_{\tau}=0$ has been obtained. We refer the reader to \cite{Zu}.

Before stating our main result, we need some preliminaries. From now on, we denote the radial derivative and the tangential component of the magnetic gradient $\D$ defined in (\ref{1.4}) as 
\begin{equation}\notag
\nabla_{A}^{r}u = \frac{x}{|x|}\cdot \nabla_{A}u, \quad \quad |\nabla_{A}^{\bot}u|^{2}=|\nabla_{A}u|^{2}-|\nabla_{A}^{r}u|^{2},
\end{equation}
respectively. Moreover, we recall (see \cite{LL}) the diamagnetic inequality 
\begin{equation}\label{diamagnetic}
|\nabla |f|(x)| \leq |\D f(x)|,
\end{equation}
which holds pointwise for almost every $x\in \Rd$ and for any $f\in H^{1}_{A}(\Rd)$, if $A\in L^{2}_{loc}(\Rd)$.

We assume that the magnetic potential $A$ satisfy 
\begin{equation}\label{gradienteb}
|\nabla \cdot A| \leq \frac{C}{|x|^{2}},
\end{equation}
for some $C>0$. We point out that this condition is only needed for the unique continuation property. In addition, we will require that 
\begin{equation}\label{extracondition}
\int_{|x|\leq R} |Au|^{2} \leq C_{R}\int |\nabla u|^{2}
\end{equation}
for any $R>0$ and some $C_{R}>0$. Combining this condition with the diamagnetic inequality (\ref{diamagnetic}) and Cauchy-Schwarz inequality, since
\begin{align}\notag
|\nabla u|^{2} &= |\D u|^{2} - |Au|^{2} + 2\Im A\bar{u}\cdot \nabla u\notag\\
& = |\D u|^{2} -3|Au|^{2} + 2\Im A\cdot \D u\bar{u},\notag 
\end{align}
we conclude that
\begin{align}
\int_{|x|\leq R} |\nabla u|^{2} 
& \leq C \int |\D u|^{2}.\notag
\end{align}
As a consequence, if $\D u \in L^{2}(\Rd)$ then $\nabla u \in L^{2}_{loc}(\Rd)$. Condition (\ref{extracondition}) will be used for the compactness argument.

We may now state our main assumptions on the potentials.

\begin{ass}\label{ass1}
\emph{Let $V_{1}(x)$, $A_{j}(x)$, $j=1,\ldots,d$, $V_{2}(x)$ be real-valued functions, $r_{0} \geq 1$ and $\mu>0$. For $d\geq 3$, if $|x|\geq r_{0}$ we assume
\begin{equation}\label{condicionf0}
\frac{|V_{1}(x)|}{|x|} + (\partial_{r}V_{1}(x))_{-} + |B_{\tau}(x)| + |V_{2}(x)| \leq \frac{c}{|x|^{1+\mu}},
\end{equation}
for some $c >0$, where $\partial_{r}V_{1}= \frac{x}{|x|}\cdot \nabla V_{1}$ is considered in the distributional sense and $(\partial_{r}V_{1})_{-}$ denotes the negative part of $\partial_{r}(V_{1})$. On the other hand, we require
\begin{equation}\label{(p0)}
V_{1}(x)=(\partial_{r}V_{1}(x))_{-}=0 \quad \textrm{if} \quad |x|\leq r_{0},
\end{equation}
and
\begin{equation}\label{(Q0)}
|V_{2}(x)|\leq \frac{c}{|x|^{2-\alpha}} \quad \textrm{if} \quad |x|\leq r_{0}, \quad \alpha >0,
\end{equation}
for some $c>0$. }
\\
\emph{If $d>3$, we consider
\begin{equation}\label{(b0)}
|B| \leq \frac{\mathcal{C^{*}}}{|x|^{2}} \quad  \quad |x|\leq r_{0},
\end{equation}
for some $\C>0$ small enough. Finally, in dimension $d=3$ we assume
\begin{equation}\label{(b10)}
|B| \leq \frac{c}{|x|^{2-\alpha}} \quad  \quad |x|\leq r_{0}, \quad \alpha > 0,
\end{equation}
for some $c>0$.}
\end{ass}

\begin{remark}\label{r01}
Without loss of generality and for simplicity, throughout the paper we take $r_{0}=1$.
\end{remark}
 
\begin{remark}
Note that the requirements on the magnetic field $B$ at the origin differ depending on the dimension. This is due to the fact that we give an extra a priori estimate for the solution $u$ of the equation (\ref{res}) when $d>3$, see (\ref{i0}) below.
\end{remark}

\begin{remark}\label{apendice}
For $d>3$ we may allow the potential $V_{2}(x)$ to be more singular. Moreover, we can also permit some singularity on the potential $V_{1}(x)$ and its repulsive part $(\partial_{r}V_{1}(x))_{-}$. When $|x|\leq r_{0}$, one can actually require
\begin{equation}\notag
|V_{2}(x)| \leq \frac{\mathcal{C}^{**}}{|x|^{2}}
\end{equation}
and
\begin{equation}\notag
(\partial_{r}V_{1}(x))_{-} \leq \frac{\mathcal{C}^{**}}{|x|^{3}}, \quad \frac{|V_{1}(x)|}{|x|} \leq \frac{\mathcal{C}^{***}}{|x|^{3}}
\end{equation}
for sufficiently small $\mathcal{C}^{**}$$ > 0$ and for some $\mathcal{C}^{***}>0$. See \cite{Zu}, chapter 2 for more details.
\end{remark}

\begin{remark}
Observe that in order to use the unique continuation result (\cite{R}), by (\ref{laplreg}) we need to verify that
\begin{equation}\label{divergencia}
|\nabla \cdot A| \leq C_{1}|x|^{-2}
\end{equation}
and
\begin{equation}\label{potencialb}
|A| \leq C_{2}|x|^{-1}
\end{equation}
provided that $C_{2}>0$ is small. On the one hand, note that condition (\ref{gradienteb}) gives (\ref{divergencia}). On the other hand, from (\ref{(b0)}) when $d>3$ with $\mathcal{C}^{*}$ small enough and (\ref{(b10)}) when $d=3$, by the Biot-Savart law it may be concluded that (\ref{potencialb}) holds. It is worth pointing out that condition (\ref{gradienteb}) is only required in order to assure that this result is applicable. 

\end{remark}

Our first theorem is the uniqueness result.

\begin{thm}\label{unicidad1}
Let $d \geq 3$, $\lambda \geq \lambda_{0} >0$ and assume (\ref{gradienteb}), (\ref{condicionf0})-(\ref{(Q0)}) and (\ref{(b0)}) or (\ref{(b10)}). Let $u\in (H^{1}_{A})_{loc}(\Rd)$ be a solution of (\ref{homogeneo}) such that 
\begin{equation}
\liminf \int_{|x|= r} (|\D u|^{2} + \lambda|u|^{2}) d\sigma(x) \to 0, \quad \textrm{as} \quad r \to \infty. \label{1.130}
\end{equation}
Then $u \equiv 0$. Moreover, if for some $\delta>0$
\begin{equation}
\int_{|x|\geq 1} \left|\D u - i\lambda^{1/2}\frac{x}{|x|}u\right|^{2}\frac{1}{(1+|x|)^{1-\delta}} < \infty
\end{equation}
is satisfied, then (\ref{1.130}) holds.
\end{thm}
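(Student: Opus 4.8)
The plan is to prove Theorem \ref{unicidad1} in two stages: first establish that the radiation-type decay condition (\ref{1.130}) forces $u\equiv 0$ via a Morawetz multiplier argument combined with unique continuation, and then show that the weighted Sommerfeld condition implies (\ref{1.130}).

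\emph{Stage one: from (\ref{1.130}) to $u\equiv 0$.} I would use the multiplier method directly on the homogeneous equation (\ref{homogeneo}). The key is to test the equation against a multiplier of the form $\varphi(|x|)\,\nabla_A^r u$ plus a $0$-order correction $\psi(|x|)u$, with radial profiles $\varphi,\psi$ chosen so that the resulting identity produces a coercive quadratic form in $\nabla_A^r u$, $\nabla_A^\bot u$ and $\lambda|u|^2$ on the exterior region $|x|\geq R$, while all error terms coming from the potentials are absorbed thanks to Assumption \ref{ass1}. Concretely, integrating by parts over the annulus $R\le |x|\le r$ and taking real and imaginary parts, the magnetic field contributes a term involving $B_\tau$ (here one invokes the algebraic fact $B_\tau\cdot x=0$ and Remark \ref{Btauremark} so that only $B_\tau$, not all of $B$, appears — this is precisely why radial multipliers suffice), the long-range part $V_1$ contributes a term controlled by $|V_1|/|x|$ and $(\partial_r V_1)_-$, and $V_2$ by its pointwise bound; all of these are $O(|x|^{-1-\mu})$ for $|x|\ge r_0$ and hence integrable after multiplication by the weight. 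The hypothesis $\lambda\geq\lambda_0>0$ gives the ellipticity needed to keep the quadratic form positive. The boundary terms at $|x|=r$ are exactly of the form appearing in (\ref{1.130}) (up to the weight $\varphi(r)$), so letting $r\to\infty$ along the sequence where the liminf is attained kills the outer boundary term; one concludes $\D u=0$ and $u=0$ on $|x|\geq R$ for $R$ large. Finally, to propagate this to all of $\Rd$ I would rewrite $L$ as the first-order perturbation of the Laplacian displayed before (\ref{laplreg}), verify via (\ref{gradienteb})$\Rightarrow$(\ref{divergencia}) and via (\ref{(b0)})/(\ref{(b10)}) together with the Biot--Savart law $\Rightarrow$(\ref{potencialb}) (with small constant) that the hypotheses of Regbaoui's unique continuation theorem \cite{R} are met — note the role of the smallness of $\C$, resp.\ $\alpha>0$, near the origin — and conclude $u\equiv 0$ in $\Rd$.

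\emph{Stage two: the weighted Sommerfeld condition implies (\ref{1.130}).} Write $g(r)=\int_{|x|=r}(|\D u|^2+\lambda|u|^2)\,d\sigma$. The assumption
$\int_{|x|\geq 1}\bigl|\D u-i\lambda^{1/2}\tfrac{x}{|x|}u\bigr|^2(1+|x|)^{-1+\delta}<\infty$
controls $|\nabla_A^\bot u|^2$, $\big|\nabla_A^r u-i\lambda^{1/2}u\big|^2$, integrated against $(1+|x|)^{-1+\delta}$. Expanding the square, $|\nabla_A^r u - i\lambda^{1/2}u|^2 = |\nabla_A^r u|^2 + \lambda|u|^2 - 2\lambda^{1/2}\Im(\nabla_A^r u\,\bar u)$, so the quantity in (\ref{1.130}) equals $2\lambda^{1/2}\Im\!\int_{|x|=r}\nabla_A^r u\,\bar u\,d\sigma$ plus something whose radial integral against $(1+r)^{-1+\delta}$ is finite. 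From the finiteness of $\int (1+r)^{-1+\delta} h(r)\,dr$ for $h\ge 0$ one extracts a sequence $r_n\to\infty$ with $r_n^{\,\delta} h(r_n)\to 0$, hence $h(r_n)\to 0$; it remains to handle the cross term $\int_{|x|=r}\nabla_A^r u\,\bar u$, which I would control by Cauchy--Schwarz, $\bigl|\int_{|x|=r}\nabla_A^r u\,\bar u\bigr|^2 \le g(r)\cdot\text{(tail)}$, together with a standard ODE/Gronwall argument on $g(r)$ using the equation (to bound $g$ polynomially) — again exploiting $\lambda\ge\lambda_0$ and the decay of the potentials — to show the cross term also vanishes along a suitable subsequence, giving (\ref{1.130}).

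\emph{Main obstacle.} The delicate point is Stage one: choosing the radial profiles $\varphi,\psi$ so that the Morawetz identity is simultaneously coercive in the exterior region and robust against the singular behaviour of the potentials, and carefully tracking the boundary terms at $|x|=r$ so that they match (\ref{1.130}) exactly rather than some stronger quantity. The handling of $B_\tau$ (rather than all of $B$) in the exterior, and the verification that Regbaoui's smallness hypotheses near the origin are genuinely implied by (\ref{(b0)})/(\ref{(b10)}) via Biot--Savart, are the two places where the precise form of Assumption \ref{ass1} is essential, and where I expect the bulk of the technical work to lie.
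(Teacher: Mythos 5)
Your overall architecture (Morawetz-type multiplier on the exterior $\{|x|\geq R\}$, decay of potentials from Assumption \ref{ass1}, Regbaoui's unique continuation near the origin via (\ref{gradienteb}), (\ref{(b0)})/(\ref{(b10)}) and Biot--Savart) matches the paper's. But Stage one has a genuine gap, and Stage two misses a key simplification.

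In Stage one you claim that one Morawetz pass, with the outer boundary term killed by (\ref{1.130}), already gives $\D u=0$ and $u=0$ on $|x|\geq R$. That does not follow. Integrating the multiplier identity over the annulus $R\le|x|\le r$ and sending $r\to\infty$ along the liminf sequence yields an \emph{estimate} of the exterior bulk in terms of boundary data at $|x|=R$ (plus potential errors), not vanishing: the inner boundary term at $|x|=R$, which involves $u$ and $\D u$ on that sphere, does not disappear. The paper's proof is precisely structured around this obstacle: Step~1 produces the quantitative bound $\int_{|x|>R}(|\D u|^2+|u|^2)\le CR^{-1-\mu}\int_{R/2\le|x|\le R}|u|^2$; Step~2 bootstraps it to show $\int_{|x|>R}|x|^m(|\D u|^2+|u|^2)<\infty$ for every $m$; and Step~3 upgrades to exponential weights $e^{t|x|^\delta}$, sets $v=e^{t|x|^\delta/2}u$, derives a new equation for $v$ and a Morawetz estimate for it, arriving at $\int_{|x|>2R}|u|^2\le C_\delta e^{-tR^\delta}(1+\lambda+t^2/R)$ with $C_\delta$ independent of $t$; only by letting $t\to\infty$ does one get $u=0$ a.e.\ on $\{|x|>2R\}$. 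This Eidus--Mochizuki bootstrap from polynomial to exponential decay is the essential mechanism your sketch omits, and without it the conclusion ``$u=0$ on the exterior'' is unjustified.

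In Stage two you introduce an ODE/Gronwall argument to control the cross term $\Im\int_{|x|=r}\nabla_A^r u\,\bar u\,d\sigma$. This is unnecessary and likely to be painful: multiplying the homogeneous equation (\ref{homogeneo}) by $\bar u$, integrating over the ball $\{|x|<R\}$ and taking imaginary parts gives the exact identity $\Im\int_{|x|=R}\frac{x}{|x|}\cdot\D u\,\bar u\,d\sigma=0$ for every $R$. Hence $\int_{|x|=r}(|\D u|^2+\lambda|u|^2)\,d\sigma=\int_{|x|=r}\bigl|\D u - i\lambda^{1/2}\tfrac{x}{|x|}u\bigr|^2\,d\sigma$ holds pointwise in $r$, and (\ref{1.130}) follows directly from the finiteness of $\int_{|x|\geq 1}\bigl|\D u-i\lambda^{1/2}\tfrac{x}{|x|}u\bigr|^2(1+|x|)^{-1+\delta}$ by choosing a subsequence along which the integrand over the sphere tends to zero.
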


The uniqueness result allows us to state the main result of this paper.

\begin{thm}\label{d>30}
Let $\mathcal{C}^{*}$ small enough, $\lambda_{0} >0$, $f\in L^{2}_{\frac{1+\delta}{2}}(\Rd)\cap L^{2}_{\delta}(\Rd)$ and assume that one of the following two conditions is satisfied:
\begin{itemize}
\item[(i)] $d >3$, with (\ref{gradienteb})-(\ref{(Q0)}) and (\ref{(b0)})
\item[(ii)] $d=3$, with (\ref{gradienteb})-(\ref{(Q0)}) and  (\ref{(b10)}).
\end{itemize}
Then, for all $\lambda\geq \lambda_{0}$ there exists a unique solution $u\in (H^{1}_{A})_{loc}(\Rd)$ of the Helmholtz equation (\ref{res}) satisfying 
\begin{align}\label{i0}
&\lambda|||u|||_{1}^{2} + |||\D u|||_{1}^{2} + \int \frac{|\D^{\bot}u|^{2}}{|x|} + \sup_{R>0}\frac{1}{R^{2}}\int_{|x|=R} |u|^{2}d\sigma_{R}\\
& + (d-3)\int \frac{|u|^{2}}{|x|^{3}} \leq C (N_{1}(f))^{2}\notag
\end{align}
and the radiation condition
\begin{equation}\label{radiacion0}
\int_{|x|\geq 1}  \left|\D u- i\lambda^{1/2}\frac{x}{|x|}u\right|^{2} \frac{1}{(1+|x|)^{1-\delta}} \leq C \int (1+|x|)^{1+\delta}|f|^{2},
\end{equation}
for all $0<\delta< 2$ such that $\delta<\mu$, where $C=C(\lambda_{0})>0$.
\end{thm}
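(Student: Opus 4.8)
The plan is the classical limiting absorption argument. We construct $u$ as the limit, in a suitable weighted space, of the solutions $u_\varepsilon\in H^1_A(\Rd)$ of the regularized equation (\ref{2.1}), whose existence follows from the self-adjointness of $L$, as recorded around (\ref{resepsilon}). Uniqueness is then immediate from Theorem \ref{unicidad1}: if $u$ and $\tilde u$ both solve (\ref{res}) and satisfy (\ref{radiacion0}), then $w=u-\tilde u$ solves the homogeneous equation (\ref{homogeneo}) and satisfies the radiation condition appearing in Theorem \ref{unicidad1}, hence (\ref{1.130}) holds for $w$ and so $w\equiv0$. Thus the whole content of the proof is the derivation, uniformly in $\varepsilon\in(0,1]$ and in $\lambda\ge\lambda_0$, of the a priori estimates (\ref{i0}) and (\ref{radiacion0}) for $u_\varepsilon$, followed by the passage to the limit.

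For the bound (\ref{i0}) I would first pair (\ref{2.1}) with $\bar u_\varepsilon$ in $L^2$, obtaining from the real and imaginary parts
\[
\int|\D u_\varepsilon|^2-\int(V_1+V_2)|u_\varepsilon|^2-\lambda\int|u_\varepsilon|^2=-\,\Re\!\int f\bar u_\varepsilon,\qquad \varepsilon\int|u_\varepsilon|^2=\Im\!\int f\bar u_\varepsilon,
\]
so that $\varepsilon\|u_\varepsilon\|_{L^2}^2$ is controlled. The key estimate comes from a Morawetz-type multiplier adapted to the magnetic case, in the spirit of Perthame--Vega \cite{PV1} and Fanelli--Vega \cite{FV}: one pairs (\ref{2.1}) with
\[
\varphi(|x|)\,\tfrac{x}{|x|}\!\cdot\!\D u_\varepsilon+\tfrac12\Bigl(\varphi'(|x|)+\tfrac{d-1}{|x|}\varphi(|x|)\Bigr)u_\varepsilon ,
\]
for a bounded, increasing, piecewise-defined radial weight $\varphi$ (matched across $|x|=r_0$), supplemented by a second multiplier $\psi(|x|)u_\varepsilon$ to close the estimate. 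Because the multiplier is radial, the magnetic field enters the resulting identity only through a term bilinear in the trapping component $B_\tau$ and $\D^\bot u_\varepsilon$; this is precisely why the decay of $B_\tau$ in (\ref{condicionf0}) is all that is required at infinity, and why only $|B|$ matters near the origin. The left-hand side of this identity reproduces, modulo absorbable errors, the quantity on the left of (\ref{i0}), while the error terms carry the weights $|B_\tau|$, $|V_1|/|x|$, $(\partial_rV_1)_-$, $|V_2|$ away from the origin and $|B|$, $|V_2|$ near it. Assumption \ref{ass1} is tailored precisely so that these are absorbed into the left-hand side: the $|x|^{-1-\mu}$ decay at infinity, the bounds (\ref{(p0)})--(\ref{(Q0)}) at the origin, and either $|B|\le\C|x|^{-2}$ with $\C$ small when $d>3$ (so that the extra positive Hardy term $\int|u_\varepsilon|^2/|x|^3$ present only in that dimension absorbs the critical magnetic contribution) or $|B|\le c|x|^{-2+\alpha}$ when $d=3$. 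Finally, the right-hand side is bounded by $N_1(f)\bigl(\lambda^{1/2}|||u_\varepsilon|||_1+|||\D u_\varepsilon|||_1\bigr)$ via the duality between $|||\cdot|||_1$ and $N_1$ analogous to the inequality recorded in the introduction, and reabsorption yields (\ref{i0}) for $u_\varepsilon$, uniformly in $\varepsilon$.

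The radiation estimate (\ref{radiacion0}) is obtained by a companion multiplier argument, pairing (\ref{2.1}) with $\theta(|x|)\bigl(\tfrac{x}{|x|}\!\cdot\!\D u_\varepsilon-i\lambda^{1/2}u_\varepsilon\bigr)$ for a weight $\theta\sim(1+|x|)^{-1+\delta}$: the main term reproduces $\int_{|x|\ge1}|\D u_\varepsilon-i\lambda^{1/2}\tfrac{x}{|x|}u_\varepsilon|^2(1+|x|)^{-1+\delta}$, the terms proportional to $\varepsilon$ carry a favorable sign and are controlled by $\varepsilon\|u_\varepsilon\|_{L^2}^2$, and the remaining terms are estimated using (\ref{i0}) already proved together with $\int(1+|x|)^{1+\delta}|f|^2$; the restrictions $\delta<\mu$ and $\delta<2$ are exactly what make the potential and magnetic errors in this identity absorbable, and one has $N_1(f)^2\le C\int(1+|x|)^{1+\delta}|f|^2$ for $f\in L^2_{(1+\delta)/2}$, $\delta>0$, so both right-hand sides are finite. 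For the limiting procedure: (\ref{i0}) gives $u_\varepsilon$ and $\D u_\varepsilon$ uniformly bounded in $L^2$ on every ball, and then (\ref{extracondition}) together with the diamagnetic inequality (\ref{diamagnetic}), through $|\nabla u_\varepsilon|^2=|\D u_\varepsilon|^2-|Au_\varepsilon|^2+2\,\Im(A\bar u_\varepsilon\!\cdot\!\nabla u_\varepsilon)$, forces $\nabla u_\varepsilon$ uniformly bounded in $L^2_{loc}(\Rd)$; hence along a subsequence $u_\varepsilon\rightharpoonup u$ weakly in $L^2_{-(1+\delta)/2}(\Rd)$ and $u_\varepsilon\to u$ strongly in $L^2_{loc}$ by Rellich's theorem. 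This suffices to pass to the limit in (\ref{2.1}) --- in particular $i\varepsilon u_\varepsilon\to0$ in $\mathcal{D}'(\Rd)$ --- so $u\in(H^1_A)_{loc}(\Rd)$ solves (\ref{res}); lower semicontinuity of all the norms in (\ref{i0}) and (\ref{radiacion0}) under weak $L^2$-convergence transfers these estimates to $u$, and Theorem \ref{unicidad1} supplies uniqueness.

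I expect the main obstacle to be the uniform multiplier estimate of the second paragraph: choosing the weights $\varphi$ and $\psi$ (and their matching at $|x|=r_0$) so that every error term --- most delicately the magnetic ones near the origin and the repulsive term $(\partial_rV_1)_-$ --- is absorbed into the left-hand side with constants independent of $\varepsilon$ and uniform for $\lambda\ge\lambda_0$. This is the structural reason behind every smallness and decay hypothesis in Assumption \ref{ass1} and behind the dimensional dichotomy between $d=3$ and $d>3$.
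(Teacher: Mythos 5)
Your outline correctly identifies the skeleton (multiplier identities, uniform estimates, passage to the limit, uniqueness via Theorem \ref{unicidad1}), but there is a genuine gap in the second paragraph: the Morawetz multiplier argument does \emph{not} give (\ref{i0}) uniformly in $\lambda\ge\lambda_{0}$. What the multiplier computation actually produces (this is the content of Theorems \ref{landagrande0} and \ref{3dlandagrande0}) is an inequality of the schematic form
\begin{equation*}
\Bigl(\tfrac{\lambda}{4}-\sigma\Bigr)|||u_{\varepsilon}|||^{2}
+\text{(good terms)}
\;\le\;\kappa(1+\lambda)|||u_{\varepsilon}|||^{2}
+\kappa\,(\text{good terms})+C(1+\varepsilon)\bigl(N(f)\bigr)^{2},
\end{equation*}
where $\sigma>0$ depends on the constants in Assumption \ref{ass1}. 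The error term $\sigma|||u_{\varepsilon}|||^{2}$ coming from $V_{1}$, $(\partial_{r}V_{1})_{-}$, $B_{\tau}$, $V_{2}$ at infinity cannot be absorbed by the left-hand side unless $\lambda$ is large enough; this is precisely why the paper proves the a priori bound only for $\lambda\ge\lambda_{1}$ with $\lambda_{1}$ large, not for all $\lambda\ge\lambda_{0}$. For the remaining compact frequency range $\lambda\in[\lambda_{0},\lambda_{1}]$ the paper cannot rely on the multiplier argument alone: it uses instead a contradiction/compactness argument (Proposition \ref{proposition}, in the spirit of Ikebe--Saito), where the Sommerfeld estimate of Proposition \ref{propositionradiation} --- which is proved with $|||u|||_{1}^{2}$ on the right, \emph{not} with $N_{1}(f)^{2}$ --- controls the tail of the Agmon--H\"ormander norm, the local elliptic estimate of Lemma \ref{int} and the Rellich--Kondrachov theorem give compactness on a fixed ball, and the uniqueness Theorem \ref{unicidad1} is invoked to reach the contradiction. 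This compactness step is entirely absent from your proposal, and it is the reason the hypothesis $\lambda\ge\lambda_{0}$ suffices and the constant depends on $\lambda_{0}$.

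A consequence is that you also have the logical order of the two estimates reversed. You propose to prove the radiation estimate (\ref{radiacion0}) "using (\ref{i0}) already proved," but in the paper the radiation bound is established first in the weaker form
\begin{equation*}
\int_{|x|\ge1}\Bigl|\D u_{\varepsilon}-i\lambda^{1/2}\tfrac{x}{|x|}u_{\varepsilon}\Bigr|^{2}(1+|x|)^{\delta-1}
\le C(1+\varepsilon)\Bigl[\,|||u_{\varepsilon}|||_{1}^{2}+(N_{1}(f))^{2}+\!\int(1+|x|)^{1+\delta}|f|^{2}\Bigr],
\end{equation*}
and only after the compactness step has delivered $|||u_{\varepsilon}|||_{1}^{2}\le C(N_{1}(f))^{2}$ for all $\lambda\ge\lambda_{0}$ does one substitute back to obtain (\ref{radiacion0}). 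Your description of the final limiting step (uniform $H^{1}_{loc}$ bounds via (\ref{extracondition}) and the diamagnetic inequality, Rellich, weak lower semicontinuity, uniqueness) is essentially what the paper does in Lemma \ref{Lemma1.11} and Theorem \ref{LAP}, so that part is fine; the missing ingredient is the bridge from $\lambda_{1}$ down to $\lambda_{0}$.
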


\begin{remark}
The smallness of the constant $\mathcal{C}^{*}$ is required for the unique continuation property proved by Regbaoui \cite{R}. This constant is not explicit.
\end{remark}


\begin{remark}
In order to prove the a priori estimate (\ref{i0}), condition (\ref{(b0)}) can be replaced by
\begin{equation}\notag
|B_{\tau}| \leq \frac{(d-1)(d-3)}{|x|^{2}}, \quad \quad |x|\leq r_{0}.
\end{equation}
\end{remark}


Theorem \ref{d>30} extends the result proved by Ikebe and Saito in the 70's. Firstly, our estimates are not only true for $\lambda\in (\lambda_{0}, \lambda_{1})$ with $0<\lambda_{0} < \lambda_{1} <\infty$ as in \cite{IS}, but also for all $\lambda \geq \lambda_{0} >0$. We also extend the Sommerfeld radiation condition (\ref{IkebeSaito}) from $\delta \in (0, 1)$ to the range $\delta\in (0,2)$. Concerning the a priori estimates, note that (\ref{i0}) is stronger than (\ref{u00}) in the sense that it gives more information about the solution and improves the $L^{2}$-weighted estimate from the $L^{2}_{-\frac{(1+\delta)}{2}}$ norm to the Agmon-H\"ormander norm. More importantly, we are able to consider singular potentials and the estimate (\ref{i0}) is uniform on $\lambda$ for $\lambda\geq \lambda_{0} >0$. This permits to prove the $L^{p}$-$L^{q}$ estimates for the electromagnetic Helmholtz equation with singular potentials. (See \cite{G1}, chapter 2 and \cite{G}). 

In order to recover the a priori estimates in the full frequency range $\lambda\geq 0$, a stronger decay on the potentials is needed. In 2009, Fanelli \cite{F} proved (\ref{i0}) with the Agmon-H\"ormander norm replaced by the Morrey-Camapanato one for any $\lambda \geq 0$ in $\Rd$. Very recently, Barcel\'o, Fanelli, Ruiz and Vilela \cite{BLRV} also get the analogous estimates for the Helmholtz equation with electromagnetic-type perturbations in the exterior of a domain. In fact, if such an estimate holds for $\lambda \geq 0$, it would imply as a by product the absence of zero-resonances (in a suitable sense) for the operator $L$. This is in general false with our type of potentials. For example, if we reduce to the case $\Delta u + Vu = 0$, the natural decay at infinity for the non-existence of zero-resonances is $|x|^{-(2+\delta)}$, $\delta >0$. See for example \cite{BRV1}, Section 3 and \cite{F} Remark 1.3. 

The general outline for proving the main result consists of the following steps:

\begin{itemize}
\item[1.] We take a sufficiently large $\lambda_{1} (> \lambda_{0})$ and we derive the Agmon-H\"ormander type estimates for any $\lambda\geq\lambda_{1}$ proceeding as in \cite{PV1}.
\item[2.] We prove that for any $\lambda\geq\lambda_{0}$ the Sommerfeld radiation condition is true if the Agmon-H\"ormander type estimates hold.
\item[3.] We use a compactness argument (in the spirit of \cite{IS}) to deduce the result for all $\lambda\geq\lambda_{0}$.
\item[4.] From the estimates proved in the previous steps and by the uniqueness theorem, we prove the limiting absorption principle for the Schr\"odinger operator $L$ satisfying (\ref{assumptionself}), (\ref{gradienteb})-(\ref{(b10)}).
\end{itemize}

\vspace{0.1cm}
{\bf Notation.} Throughout the paper, $C$ denotes an arbitrary positive constant and $\kappa$ stands for a small positive constant. In most of the cases, $\kappa$ will come from the inequality $ab \leq \kappa a^{2}+ \frac{1}{4\kappa}b^{2}$, which is true for arbitrary $\kappa >0$. In the integrals where we do not specify the integration space we mean that we are integrating in the whole $\Rd$ with respect to the Lebesgue measure $dx$, i.e. $\int = \int_{\Rd} dx$.

\section{Proof of Theorem \ref{d>30}}\label{sectionmain}

According to the steps given above, the proof will be divided into four parts. 

\subsection{A priori estimates for $\lambda$ large enough ($\lambda \geq \lambda_{1}$)}

We begin by proving the Agmon-H\"ormander type estimates for solutions of the equation (\ref{2.1}) for $\lambda$ large enough. Since our assumptions on the magnetic field differ depending on the dimension, we first give a detailed proof of the result for $d>3$. Then the three dimensional case follows by the same method.

\begin{thm}\label{landagrande0}
For dimension $d>3$, let $\varepsilon >0$, $f$ such that $N_{1}(f)<\infty$. Let $\C<\sqrt{(d-1)(d-3)}$. Assume that (\ref{condicionf0})-(\ref{(Q0)}) and 
\begin{equation}\label{(b0'')}
|B_{\tau}|\leq \frac{\mathcal{C}^{*}}{|x|^{2}} \quad \quad  \textrm{if} \quad \quad |x|\leq 1
\end{equation}
hold. Then there exists $\lambda_{1}>0$ such that for any $\lambda \geq \lambda_{1}$ the solution $u\in H^{1}_{A}(\Rd)$ of the Helmholtz equation (\ref{2.1}) satisfies
\begin{align}\label{landabig0}
\lambda|||u|||_{1}^{2} + |||\D u|||_{1}^{2} + \int &\frac{|\D^{\bot}u|^{2}}{|x|} + \sup_{R > 0}\frac{1}{R^{2}}\int_{|x|=R} |u|^{2}d\sigma_{R} + \int \frac{|u|^{2}}{|x|^{3}}\\
& \leq C(1+\varepsilon)(N_{1}(f))^{2},\notag
\end{align}
where $C=C(\lambda_{1})>0$ is independent of $\varepsilon$.
\end{thm}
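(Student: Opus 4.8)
The plan is to adapt the Morawetz / Perthame--Vega multiplier scheme (\cite{PV1}) to the magnetic setting. Let $u\in H^1_A(\Rd)$ solve (\ref{2.1}). I would test the equation against $\overline{\mathcal{M}u}$ with
$$
\mathcal{M}u \;=\; \varphi(|x|)\,\nabla_A^r u \;+\; \frac{d-1}{2|x|}\,\varphi(|x|)\,u ,
$$
where $\varphi$ is a radial profile equal to $r$ for $r\le R$ and saturating to the constant $R$ for $r\ge R$ (suitably regularized), then take the real part, integrate by parts, and finally optimize over the free scale $R$: taking $\sup_{R>0}$ produces the Morrey--Campanato terms $\lambda|||u|||^2+|||\D u|||^2$ together with the boundary term $\sup_R R^{-2}\int_{|x|=R}|u|^2$, while letting $R\to 0$ produces the globally weighted terms $\int|x|^{-1}|\D^{\bot}u|^2$ and (for $d>3$) $\int|x|^{-3}|u|^2$.

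First I would record the core identity. For this $\mathcal{M}$, integrating $\Re\!\int(-\D^{2}u)\overline{\mathcal{M}u}$ by parts yields a nonnegative quadratic form in $\D u$ (densities $\tfrac12\varphi'$ on $|\nabla_A^r u|^2$ and $\varphi/r-\tfrac12\varphi'$ on $|\D^{\bot}u|^2$), a boundary term $\sim R^{-1}\int_{|x|=R}|u|^2$, and a $|u|^2$-density $-\tfrac14\Delta\!\big(\tfrac{d-1}{2|x|}\varphi\big)$, which for the truncated profile is a positive multiple of $(d-1)(d-3)\,|x|^{-3}$ on $\{|x|>R\}$; the $\lambda$-term contributes $\tfrac\lambda2\int_{|x|<R}|u|^2$. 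The new feature is magnetic: commuting $\D$ past the radial derivative $\nabla_A^r$ produces the magnetic curvature, which after contraction with $x/|x|$ is exactly the tangential field $B_\tau$ (here one uses $B_\tau\cdot x=0$), so the only extra contribution is
$$
\mathcal{B}\;=\;\pm\,\Im\!\int \varphi\,(B_\tau\cdot\D^{\bot}u)\,\bar u ,\qquad |\mathcal{B}|\le \int \varphi\,|B_\tau|\,|\D^{\bot}u|\,|u| .
$$
This is why only $B_\tau$, not the whole field, is needed away from the origin; near the origin, (\ref{(b0'')}) is what enters.

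Next I would absorb the perturbations. For $\mathcal{B}$: where $|x|\ge r_0$ the decay $|B_\tau|\le c|x|^{-1-\mu}$ is subcritical; where $|x|\le r_0$ one has $\varphi|B_\tau|\lesssim \C\,|x|^{-1}$, and $\int|x|^{-1}|B_\tau|\,|\D^{\bot}u|\,|u|$ is dominated, via the weighted Hardy inequality tying $\int|x|^{-1}|\D^{\bot}u|^2$ to $\int|x|^{-3}|u|^2$, by $\C$ times the positive combination of those two terms on the left, with $\sqrt{(d-1)(d-3)}$ exactly the threshold leaving a net positive remainder --- which explains the appearance of the $|x|^{-3}|u|^2$ term and the collapse of this mechanism in $d=3$, where (\ref{(b10)}) provides instead a subcritical bound. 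Similarly, $V_1$ contributes $-\int\varphi(\partial_r V_1)|u|^2$ plus a commutator term bounded by $|V_1|/|x|$, nonnegative up to an error absorbable into $\lambda|||u|||^2$ by (\ref{condicionf0}) and (\ref{(p0)}), while $V_2$ is subcritical for $|x|\ge r_0$ and, for $|x|\le r_0$, the strictly sub-Hardy bound (\ref{(Q0)}) ($\alpha>0$) is absorbed using Hardy's inequality and the $\int|x|^{-1}|\D^{\bot}u|^2$, $\int|x|^{-3}|u|^2$ terms. The forcing is controlled, after division by $R$, by $\Re\!\int f\,\overline{\mathcal{M}u}\le C N(f)\big(|||\D u|||+\lambda^{1/2}|||u|||+\dots\big)$ and reabsorbed with a small $\kappa$; the term $\varepsilon\,\Im\!\int u\,\overline{\mathcal{M}u}$ is estimated crudely using the $H^1_A$ a priori bound granted by the self-adjointness of $L$ (see (\ref{assumptionself})), which is the source of the factor $(1+\varepsilon)$. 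Finally, taking $\lambda_1$ large makes $\lambda|||u|||^2$ dominate the residual lower-order errors (chiefly the $|V_1|/|x|$ commutator and the contribution of the weight $\tfrac{d-1}{2|x|}\varphi$); this fixes $\lambda_1$, and the restrictions $\lambda\ge\lambda_1$ and the dependence on $\varepsilon$ are removed afterwards by the compactness step.

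The main obstacle is exactly this simultaneous absorption near the origin: $B_\tau$, the repulsive tail $(\partial_r V_1)_-$ and the inverse-square singularity of $V_2$ all draw on the same positive budget, the sharp-constant $\int|x|^{-3}|u|^2$ and $\int|x|^{-1}|\D^{\bot}u|^2$; in $d=3$ this budget is empty, which is precisely why (\ref{(b10)}) replaces (\ref{(b0'')}) and the three-dimensional case has to be argued separately.
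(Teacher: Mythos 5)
Your proposal matches the paper's proof in all essentials: the same radial Perthame--Vega multiplier scheme with a profile saturating at scale $R$ (the paper's $\psi'=M+|x|/2R$, $\varphi=1/4R$ on $|x|\le R$ is just a two-parameter version of your $\mathcal{M}$), the same near-/far-origin split of the potential terms, the same reduction of the magnetic contribution to $B_\tau$ via $B_\tau\cdot x=0$, the same use of the identities $\varepsilon\int|u|^2\le\int|f||u|$ and the a priori bound on $\int|\D u|^2$ to control the $\varepsilon$-term, and the same sharp threshold $(\C)^2<(d-1)(d-3)$ arising from positivity of the quadratic form in $\bigl(\int|x|^{-1}|\D^{\bot}u|^2\bigr)^{1/2}$ and $\bigl(\int|x|^{-3}|u|^2\bigr)^{1/2}$. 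The only small misstatement is calling that last absorption a ``weighted Hardy inequality'' tying the two terms together: in the paper the two weighted integrals are independent positive quantities on the left and the cross term from $B_\tau$ is handled by Cauchy--Schwarz plus the discriminant condition $\C^2(M+1/2)^2/M^2<(d-1)(d-3)$ on the resulting binary quadratic form, not by a Hardy-type bound relating one to the other.
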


\begin{proof}
The proof is based on the identities which are established in Appendix. Let $\varphi, \psi$ be real-valued radial functions. Adding up (\ref{(4.11)}) and (\ref{(4.3)}) we have
\begin{align}\label{batuketa}
&\int \D u\cdot D^{2}\psi\cdot  \overline{\D u}-\int \varphi |\D u|^{2}  - \Re \int \left(\nabla\varphi -\frac{\nabla(\Delta\psi)}{2} \right)\cdot \D u\bar{u}\\ 
&+\int\lambda|u|^{2}+ \int \varphi V_{1} |u|^{2}+ \frac{1}{2}\int \psi' \partial_{r} V_{1} |u|^{2}- \Im  \int \psi' B_{\tau} \cdot \D u \bar{u}\notag\\
& + \int \left( \varphi  -\frac{ \Delta\psi}{2}\right) V_{2}|u|^{2} - \Re\int V_{2}\nabla\psi \cdot \D u \bar{u}\notag\\
& = \varepsilon \Im\int \nabla\psi\cdot \D u\bar{u}+\Re\int\left( \varphi-\frac{ \Delta\psi}{2}\right) f\bar{u}-\Re \int f\nabla\psi \cdot \overline{\D u}. \notag
\end{align}

Let us define for $R>0$ the function $\psi(x)= \int_{0}^{|x|} \phi(s)ds$, where
\begin{equation}\notag
\phi(r)  = \left\{ \begin{array}{ll}
\frac{r}{2R} + M & \textrm{if $0 < r \leq R$},\vspace{0.1cm} \\
M+\frac{1}{2} & \textrm{if $r\geq R$},
\end{array} \right.
\end{equation}
so that $\nabla \psi(x) = \frac{x}{|x|}\phi(|x|)$ for arbitrary $M>0$,
\begin{equation}\notag
\varphi(x) = \left\{ \begin{array}{ll}
\frac{1}{4R} & \textrm{if $|x| \leq R$},\\
0 & \textrm{if $|x| \geq R$},
\end{array} \right.
\end{equation}
and we put these multipliers into (\ref{batuketa}). 

First, note that since $N_{1}(f)<\infty$ then $f\in L^{2}(\Rd)$. Thus it is guaranteed the existence of solution of (\ref{2.1}) in $H_{A}^{1}(\Rd)$. As a consequence, the terms on the right-hand side of (\ref{batuketa}) are finite. It is easy to check that 
\begin{align}
&\left| \varepsilon \Im\int \nabla\psi\cdot \D u\bar{u}+\Re\int\left( \varphi-\frac{ \Delta\psi}{2}\right) f\bar{u}-\Re \int f\nabla\psi \cdot \overline{\D u}\right| \notag\\
& \leq C \left(\Vert f\Vert_{L^{2}}^{2} + \Vert \D u\Vert_{L^{2}}^{2} + \Vert u \Vert_{L^{2}}^{2} \right) < \infty.\notag
\end{align}

Let us show the positivity of the left-hand side of (\ref{batuketa}) with the above choice of the multipliers. Since
\begin{equation}\label{hessianoa}
\D u \cdot D^{2}\psi \cdot \overline{\D u} = \psi''|\D^{r}u|^{2} + \frac{\psi'}{r}|\D^{\bot}u|^{2},
\end{equation}
it follows easily that
\begin{align}\notag
\int \D u\cdot D^{2}\psi \cdot\overline{\D u} - \int \varphi|\D u|^{2} & > \frac{1}{4R}\int_{|x|\leq R} |\D u|^{2} + M\int\frac{|\D^{\bot}u|^{2}}{|x|},\notag
\end{align}
\begin{align}\notag
\int \varphi \lambda |u|^{2} = \frac{1}{4R}\int_{|x|\leq R} \lambda |u|^{2}.
\end{align}
In addition, since $\varphi$ and $\psi''$ are discontinuous in $\{|x|=R\}$, note that integrating by parts the term
\begin{equation}\label{phipsi}
-\Re\int \left(\nabla\varphi - \frac{\nabla(\Delta\psi)}{2} \right)\cdot \D u\bar{u}
\end{equation}
gives a surface integral. In fact, after substituting our test functions in (\ref{phipsi}), we get
\begin{align}
-\Re\int \left(\nabla\varphi- \frac{\nabla(\Delta\psi)}{2} \right)\cdot \D u\bar{u} & > \frac{M(d-1)(d-3)}{4}\int \frac{|u|^{2}}{|x|^{3}}\notag\\
& + \frac{(d-1)}{8R^{2}}\int_{|x|=R}|u|^{2}d\sigma_{R}.\notag
\end{align}

Let us analyze the terms containing the potentials. In what follows, $\sigma=\sigma(c,\mu,\alpha, M)$ denotes a positive constant where the parameters $c, \mu, \alpha$ have been introduced in Assumption \ref{ass1} and $M>0$ is related to the multipliers. For simplicity of notation, we use the same letter $\sigma$ for all constants related to the potentials. 

In order to estimate the term involving the magnetic field, observe that since $B_{\tau}$ is a tangential vector to the sphere, we have
\begin{equation}\label{magnetictangenbtau}
B_{\tau} \cdot \D u = B_{\tau} \cdot \D^{\bot} u.
\end{equation}
Hence, 
\begin{align}
\Im  \int \psi' B_{\tau} \cdot \D u \bar{u} &\leq (M+1/2) \int_{|x|\leq 1}  |B_{\tau}||\D^{\bot}u| |u|\notag\\
& +(M+1/2) \int_{|x|\geq 1} |B_{\tau}||\D^{\bot}u| |u|\notag\\
&\equiv B_{1} + B_{2}\notag,
\end{align}
where by (\ref{condicionf0}), (\ref{(b0'')}) and Cauchy-Schwarz inequality, yields
\begin{align}
B_{1} & \leq \C(M+1/2)\left( \int_{|x|\leq 1}\frac{|\D^{\bot}u|^{2}}{|x|} \right)^{1/2}\left( \int_{|x|\leq 1} \frac{|u|^{2}}{|x|^{3}} \right)^{1/2}\label{magnetico}
\end{align}
and
\begin{align}
B_{2} & \leq (M+1/2)\int_{|x|\geq 1}|x|^{1/2}|B_{\tau}|\frac{|\D^{\bot}u||u|}{|x|^{1/2}}\notag\\
& \leq \frac{M}{2}\int_{|x|\geq 1}\frac{|\D^{\bot}u|^{2}}{|x|} + \frac{c(M+1/2)^{2}}{2M}\sum_{j\geq 0}2^{-2\mu j}\int_{C(j)}\frac{|u|^{2}}{2^{j}}\notag\\
& \leq \frac{M}{2}\int_{|x|\geq 1}\frac{|\D^{\bot}u|^{2}}{|x|} + \sigma|||u|||_{1}^{2}.\notag
\end{align}
We next turn to estimate the $V_{1}$ terms. Similarly, by (\ref{condicionf0}) and (\ref{(p0)}), we get
\begin{align}
-\int \varphi V_{1}|u|^{2} & \leq \frac{1}{4}\int_{1\leq |x| \leq R} \frac{|V_{1}||u|^{2}}{|x|}\notag\\
& \leq \frac{1}{4}\sum_{j\geq 0}\int_{C(j)} \frac{|V_{1}||u|^{2}}{2^{j}} \leq \sigma|||u|||_{1}^{2}\notag
\end{align}
and
\begin{align}
-\frac{1}{2}\int \psi' \partial_{r}V_{1}|u|^{2} &\leq \frac{1}{2}\int \psi'(\partial_{r} V_{1})_{-}|u|^{2}\notag\\
& \leq \frac{(M+1/2)}{2}\sum_{j\geq 0}\int_{C(j)}(\partial_{r} V_{1})_{-}|u|^{2}\notag\\
& \leq \sigma|||u|||_{1}^{2}.\notag
\end{align}
As far as the potential $V_{2}$ is concerned, let us first take $j_{1}=j_{1}(\alpha) < 0$ such that
\begin{equation}\label{alpha}
c\sum_{j\leq j_{1}} 2^{\alpha j} < \eta, \quad \quad \quad \quad c\sum_{j\leq j_{1}} 2^{2\alpha j} < \eta
\end{equation}
where $\eta>0$ stands for a small constant, being $c$ and $\alpha$ as in (\ref{(Q0)}). To simplify notation, we continue to write $\eta$ for any small constant related to the potentials. We fix $r_{1}<1$ by $2^{j_{1}-1}\leq r_{1} \leq 2^{j_{1}}$. Then, by (\ref{condicionf0}), $(\ref{(Q0)})$ and Cauchy-Schwarz inequality, we have
\begin{align}
\Re\int V_{2}\nabla\psi\cdot\D u \bar{u} & \leq (M+1/2) \int_{|x|\leq 1} |V_{2}||\D u| |u|\notag\\
& +(M+1/2) \int_{|x|\geq 1} |V_{2}||\D u||u|\notag\\
& \equiv V_{21} + V_{22}.\notag
\end{align}
Let us make now the following observation. 
\begin{align}
\sum_{j\leq 0}\int_{C(j)}\frac{|u|^{2}}{2^{j(3-\gamma)}} & \leq \sum_{j\leq 0}\int_{2^{j-1}}^{2^{j}}\int_{|x|=r} \frac{|u|^{2}}{r^{2}} 2^{-j(1-\gamma)}\notag\\
& \leq \sup_{R\leq 1} \frac{1}{R^{2}}\int_{|x|=R} |u|^{2}\sum_{j\leq 0} \int_{2^{j-1}}^{2^{j}} 2^{j(\gamma-1)}\notag\\
& \leq \sup_{R >0} \frac{1}{R^{2}}\int_{|x|=R} |u|^{2}\sum_{j\leq 0} 2^{j\gamma}\notag
\end{align}
and $\sum_{j\leq 0} 2^{\gamma j} < \infty$ if $\gamma >0$. According to the above remark, using again the Cauchy-Schwarz inequality and the relation $ab \leq \frac{1}{16} a^{2} +4 b^{2}$, we have
\begin{align}
V_{21} &\leq c(M+1/2)\left[\sum_{j\leq j_{1}}\int_{C(j)}\frac{|\D u||u|}{2^{j(2-\alpha)}} + \sum_{j=j_{1}}^{0}2^{-j(1-\alpha)}\int_{C(j)}\frac{|\D u||u|}{2^{j}}\right] \notag\\
&  \leq \eta\left(\sup_{R\leq r_{1}}\frac{1}{R}\int_{|x|\leq R}|\D u|^{2} \right)^{\frac{1}{2}}\left(\sup_{R>0}\frac{1}{R^{2}}\int_{|x|=R}|u|^{2} \right)^{\frac{1}{2}}\notag\\
&+\frac{1}{16}\sup_{R>0} \frac{1}{R} \int_{r_{1} \leq |x|\leq R} |\D u|^{2} + \sigma|||u|||_{1}^{2}\notag
\end{align}
and
\begin{align}
V_{22} & \leq c(M+1/2)\sum_{j\geq 0}\left( \int_{C(j)} \frac{|\D u|^{2}}{2^{j}} \right)^{1/2}\left( \int_{C(j)}\frac{|u|^{2}}{2^{j(1+2\mu)}}\right)^{1/2}\notag\\
& \leq \frac{1}{16}\sup_{R\geq 1} \frac{1}{R} \int_{r_{1}\leq |x|\leq R} |\D u|^{2}  + \sigma|||u|||_{1}^{2}.\notag
\end{align}
Analysis similar to the above gives
\begin{align}
-\int \varphi V_{2}|u|^{2} & \leq \frac{c}{4}\int_{|x|\leq 1}\frac{|u|^{2}}{|x|^{3-\alpha}} + \frac{1}{4}\sum_{j\geq 0}\int_{C(j)}\frac{|V_{2}||u|^{2}}{|x|}\notag\\
& \leq \eta\sup_{R \leq r_{1}} \frac{1}{R^{2}}\int_{|x|=R}|u|^{2}d\sigma_{R} +\sigma|||u|||_{1}^{2}\notag
\end{align}
and
\begin{align}
\frac{1}{2}\int \Delta\psi V_{2}|u|^{2} & \leq \left(\frac{d}{4}+\frac{M(d-1)}{2}\right)\int_{\Rd}\frac{|V_{2}||u|^{2}}{|x|}\notag\\
& \leq \eta\left(\frac{d}{4}+\frac{M(d-1)}{2}\right)\sup_{R >0} \frac{1}{R^{2}}\int_{|x|=R}|u|^{2}d\sigma_{R}\notag\\
&+\sigma|||u|||_{1}^{2}. \notag
\end{align}

In order to simplify the reading, let us introduce
\begin{align*}
&a_{1}=\left(\sup_{R\leq r_{1}}\frac{1}{R}\int_{|x|\leq R}|\D u|^{2}\right)^{1/2} , \qquad a_{2}=\left( \int_{|x|\leq 1}\frac{|\D^{\bot}u|^{2}}{|x|} \right)^{1/2},\\
&a_{3}=\left( \int_{|x|\leq 1}\frac{|u|^{2}}{|x|^{3}}\right)^{1/2}, \qquad \quad a_{4}=\left(\sup_{R>0}\frac{1}{R^{2}}\int_{|x|=R}|u|^{2}d\sigma_{R}\right)^{1/2}.
\end{align*}
Therefore, it turns out that the potential terms on (\ref{batuketa}) are lower bounded by
\begin{align}
&-\frac{M}{2}\int_{|x|\geq 1} \frac{|\D^{\bot}u|^{2}}{|x|} - \mathcal{C}^{*}(M+1/2)a_{2}a_{3} -(M+1/2)\eta a_{1}a_{4}- \eta a_{4}^{2}\notag\\
& -\frac{1}{8}\sup_{R>0}\frac{1}{R}\int_{r_{1}\leq |x|\leq R} |\D u|^{2} - \sigma|||u|||_{1}^{2}\notag.
\end{align}

Our next step is to estimate the right-hand side of (\ref{batuketa}). Let us start by the $\varepsilon$ term. From the a priori estimate (\ref{b+++0}), by the assumptions (\ref{condicionf0})-(\ref{(Q0)}), by (\ref{alpha}) and the Hardy inequality (\ref{A.1}), it may be concluded that
\begin{equation}\label{B10}
\int |\D u|^{2} \leq \lambda\int |u|^{2} +\sigma\int |u|^{2} + \int_{\Rd}|f||u|.
\end{equation}
Recall that $\sigma$ denotes a positive constant related to the potentials. Hence combining (\ref{B10}) with (\ref{a+++0}), by Cauchy-Schwarz inequality and the fact that $\int |f||u| \leq N_{1}(f)|||u|||_{1}$, we obtain

\begin{align}\label{epsilonterm1}
\varepsilon \Im\int\nabla\psi\cdot\D u\bar{u} &\leq (M+1/2)\varepsilon\int |\D u||u|\\
& \leq (M+1/2)\varepsilon^{1/2}\left(\varepsilon\int |u|^{2}\right)^{1/2}\left(\int |\D u|^{2}\right)^{1/2}\notag\\
& \leq (M+1/2)\varepsilon^{1/2}\int |f||u|\notag\\
& +(M+1/2)\varepsilon^{\frac{1}{2}} \left(\int |f||u|\right)^{\frac{1}{2}}\left( (\lambda +\sigma)\int |u|^{2}\right)^{\frac{1}{2}}\notag\\
& \leq (M+1/2)(\varepsilon^{1/2} + (\lambda+\sigma)^{1/2})\int |f||u|\notag\\
& \leq \kappa(1+\lambda)|||u|||_{1}^{2} + C(1+\varepsilon)(N_{1}(f))^{2}.\notag
\end{align}
It remains to estimate the terms containing $f$ which can be handled in much the same way as the rest. In fact, it follows that
\begin{align*}
&\Re\int  f\left( \varphi - \frac{1}{2}\Delta\psi\right)\bar{u}  \leq \frac{M+1}{2}\left( \int_{|x|\leq 1} \frac{|f| |u|}{|x|} + \int_{|x|\geq 1} \frac{|f||u|}{|x|}\right)\\
&\leq \frac{M+1}{2}\left[\left(\int_{|x|\leq 1} |f|^{2} \right)^{\frac{1}{2}}\left(\int_{|x|\leq 1} \frac{|u|^{2}}{|x|^{3}}\right)^{\frac{1}{2}} +\sum_{j\geq 0} \left( 2^{j}\int_{C(j)} |f|^{2}\right)^{\frac{1}{2}}\left( \int_{C(j)}\frac{|u|^{2}}{2^{3j}}\right)^{\frac{1}{2}}\right]\\
& \leq \kappa \sup_{R>0}\frac{1}{R^{2}}\int_{|x|=R}|u|^{2}d\sigma_{R} + C(N_{1}(f))^{2}
\end{align*}
and
\begin{align*}
\Re\int f\nabla\psi\cdot\overline{\D u} &\leq (M+1/2)\int  |f||\D u| \notag\\
&\leq \kappa|||\D u|||_{1}^{2} + C(N_{1}(f))^{2}.\notag
\end{align*}

Finally, due to the freedom on the choice of $R$, let us take the supremum over $R>0$ on the both sides of the inequality. Thus from the above estimates, we obtain
\begin{align}
& \left( \frac{\lambda}{4}-\sigma\right) |||u|||^{2}  + \frac{M}{2}\int_{|x|\geq 1}\frac{|\D^{\bot}u|^{2}}{|x|}+  \left(Ê\frac{d-1}{8} - \eta\right) a_{4}^{2}+\frac{a_{1}^{2}}{4}  - \eta a_{1}a_{4}\notag\\
& +  \frac{1}{8}\sup_{R>0}\frac{1}{R}\int_{r_{1}\leq |x|\leq R} |\D u|^{2} + \frac{M(d-1)(d-3)}{4}\int_{|x|\geq 1}\frac{|u|^{2}}{|x|^{3}}\notag\\
&+ Ma_{2}^{2} + \frac{M(d-1)(d-3)a_{3}^{2}}{4}  - \C(M+1/2)a_{2}a_{3} \notag\\
& \leq \kappa\left[ (1+\lambda)|||u|||_{1}^{2} + |||\D u|||_{1}^{2} + a_{4}^{2}\right] + C(\varepsilon + 1)\left( N_{1}(f)\right)^{2}.\notag
\end{align}

\vspace{0.1cm}
Observe that we need 
\begin{equation}\notag
Ma_{2}^{2} + \frac{M(d-1)(d-3)}{4}a_{3}^{2} - \C(M+1/2)a_{2}a_{3} > 0,
\end{equation}
which is satisfied if
\begin{equation}\notag
\frac{1}{(d-1)(d-3)}{(\C)}^{2}\frac{(M+1/2)^{2}}{M^{2}} < 1.
\end{equation}
Letting $M \to \infty$, we obtain 
\begin{equation}\notag
{(\C)}^{2} < (d-1)(d-3),
\end{equation}
which is our assumption.

Consequently, noting that $|||\cdot ||| \geq |||\cdot |||_{1}$, taking $\kappa$, $\eta$ small enough and $\lambda_{1}=\lambda_{1}(\sigma,\kappa,j_{1}) > 0$ large enough, we conclude (\ref{landabig0}), which is our claim.
\end{proof}

The result is slightly different in the 3d-case.

\begin{thm}\label{3dlandagrande0}
For dimension $d=3$, let $\varepsilon >0$, $f$ such that $N_{1}(f)<\infty$ and assume that (\ref{condicionf0})-(\ref{(Q0)}) and
\begin{equation}\label{(b10')}
|B_{\tau}|\leq \frac{c}{|x|^{2-\alpha}} \quad \quad |x|\leq 1 \quad \quad \quad c, \alpha >0
\end{equation} 
hold. Then there exists $\lambda_{1}>0$ so that for any $\lambda \geq \lambda_{1}$ the solution $u\in H^{1}_{A}(\Rd)$ of the Helmholtz equation (\ref{2.1}) satisfies
\begin{align}
\lambda|||u|||_{1}^{2} + |||\D u|||_{1}^{2}& + \int \frac{|\D^{\bot}u|^{2}}{|x|} + \sup_{R>0}\frac{1}{R^{2}}\int_{|x|=R} |u|^{2}d\sigma_{R}\\
& \leq C(1+\varepsilon)(N_{1}(f))^{2},\notag
\end{align}
being $C$ independent of $\varepsilon$.
\end{thm}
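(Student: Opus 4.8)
The plan is to run the multiplier argument of Theorem~\ref{landagrande0} verbatim, noting only the points where dimension $d=3$ forces a modification. As before, the starting point is identity~(\ref{batuketa}) with the same radial multipliers $\psi'$, $\varphi$ depending on a parameter $R>0$ and $M>0$. The positive terms coming from the Hessian of $\psi$, from $\varphi|\D u|^{2}$, and from $\varphi\lambda|u|^{2}$ are identical, giving $\tfrac1{4R}\int_{|x|\le R}(|\D u|^{2}+\lambda|u|^{2})$ and $M\int|\D^{\bot}u|^{2}/|x|$. The one structural change is in the surface/commutator term~(\ref{phipsi}): when $d=3$ the factor $(d-1)(d-3)$ vanishes, so the term $\frac{M(d-1)(d-3)}{4}\int|u|^{2}/|x|^{3}$ that controlled the dangerous $a_{3}^{2}$ quantity disappears, and correspondingly there is no $\int|u|^{2}/|x|^{3}$ term in the final estimate. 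Only the surface contribution $\frac{d-1}{8R^{2}}\int_{|x|=R}|u|^{2}d\sigma_{R}$ survives from~(\ref{phipsi}).

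The main obstacle is therefore the magnetic term $\Im\int\psi' B_{\tau}\cdot\D u\,\bar u$ in the region $|x|\le r_{0}$: in the higher-dimensional proof it was absorbed using $a_{2}^{2}=\int_{|x|\le r_{0}}|\D^{\bot}u|^{2}/|x|$ together with $a_{3}^{2}=\int_{|x|\le r_{0}}|u|^{2}/|x|^{3}$ via the smallness condition $(\C)^{2}<(d-1)(d-3)$, and in $d=3$ that mechanism is unavailable. This is precisely why hypothesis~(\ref{(b10')}) imposes the strictly subcritical bound $|B_{\tau}|\le c|x|^{-\alpha-2}$ rather than $c|x|^{-2}$. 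With the extra decay I would estimate, for $|x|\le r_{0}$,
\begin{align*}
\Im\int_{|x|\le r_{0}}\psi' B_{\tau}\cdot\D^{\bot}u\,\bar u
&\le c(M+\tfrac12)\sum_{j\le j_{0}}\int_{C(j)}\frac{|\D^{\bot}u||u|}{|x|^{2-\alpha}}\\
&\le c(M+\tfrac12)\sum_{j\le j_{0}}2^{\alpha j}\left(\int_{C(j)}\frac{|\D^{\bot}u|^{2}}{2^{j}}\right)^{1/2}\left(\int_{C(j)}\frac{|u|^{2}}{2^{3j}}\right)^{1/2},
\end{align*}
then Cauchy--Schwarz in $j$ together with the summability $\sum_{j\le j_{1}}2^{\alpha j}<\eta$ from~(\ref{alpha}) lets me bound this by $\eta\,a_{4}\big(\sup_{R>0}\tfrac1R\int_{|x|\le r_{1}}|\D^{\bot}u|^{2}\big)^{1/2}$ on the inner shells plus $\tfrac{M}{2}\int_{|x|\le r_{0}}|\D^{\bot}u|^{2}/|x|+\sigma|||u|||^{2}$ on the remaining finitely many shells $j_{1}\le j\le j_{0}$; the $\eta$ and the $\tfrac M2$ factors are absorbed exactly as the $a_{1}a_{4}$ and $\int_{|x|\ge r_{0}}|\D^{\bot}u|^{2}/|x|$ terms were in Theorem~\ref{landagrande0}.

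All the remaining pieces are unchanged: the $V_{1}$ terms are bounded by $\sigma|||u|||^{2}$ using~(\ref{condicionf0}) and~(\ref{(p0)}); the $V_{2}$ terms (which now also allow $|x|^{-2+\alpha}$ singularity at the origin by~(\ref{(Q0)})) are handled with the observation~(\ref{observQ1}) and the smallness~(\ref{alpha}), yielding $\eta a_{4}^{2}+\eta a_{1}a_{4}+\tfrac18\sup_R\tfrac1R\int_{r_{1}\le|x|\le R}|\D u|^{2}+\sigma|||u|||^{2}$; the $\varepsilon$-term is controlled by~(\ref{B10}), (\ref{a+++0}) and Cauchy--Schwarz to give $\kappa(1+\lambda)|||u|||^{2}+C(1+\varepsilon)(N(f))^{2}$; and the two $f$-terms give $\kappa|||\D u|||^{2}+\kappa a_{4}^{2}+C(N(f))^{2}$ via $\int|f||u|\le N(f)|||u|||$. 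Taking the supremum over $R>0$, then choosing $\kappa,\nu,\eta$ small and $\lambda_{1}$ large enough so that the coefficients of $|||u|||^{2}$, $|||\D u|||^{2}$, $a_{4}^{2}$ and $\int|\D^{\bot}u|^{2}/|x|$ on the left stay positive, produces the claimed estimate; no condition on $\C$ appears because there is no $a_{3}^{2}$ term to balance.
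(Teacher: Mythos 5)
Your proposal is correct and follows essentially the same argument as the paper: rerun the multiplier identity of Theorem~\ref{landagrande0}, note that the $(d-1)(d-3)\int|u|^{2}/|x|^{3}$ term vanishes for $d=3$, and then use the strictly subcritical decay $|B_{\tau}|\leq c|x|^{\alpha-2}$ from~(\ref{(b10')}) to split the near-origin magnetic contribution into inner shells (small by~(\ref{alpha}), controlled by $a_{4}$ and the tangential-gradient term) and the finite remaining shells (absorbed by the Hessian's $M\int|\D^{\bot}u|^{2}/|x|$ term plus $\sigma|||u|||^{2}$), exactly as the paper does in its treatment of $B_{1}$. The only cosmetic difference is that the paper fixes $M=1/2$ whereas you keep $M$ general; this does not change the argument.
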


\begin{proof}
The proof follows by the same method as in the $d>3$ case. We will use the same multipliers as in the previous theorem fixing $M=1/2$. The main difference is that when $d=3$ we do not get the term related to
$
\int \frac{|u|^{2}}{|x|^{3}}
$
on the left-hand side of the inequality. Therefore, it is not possible to estimate the magnetic term as in (\ref{magnetico}). This requires the assumption (\ref{(b10')}) on the magnetic field $B$. Thus in this case, using the same notation as in the previous theorem we obtain
\begin{align}
B_{1} & \leq \int_{|x|\leq r_{1}} |B_{\tau}||u||\D^{\bot}u| + \int_{r_{1}\leq |x|\leq 1} |B_{\tau}||\D^{\bot}u||u|\notag\\
&\leq \eta\left(\int_{|x|\leq r_{1}}\frac{|\D^{\bot}u|^{2}}{|x|} \right)^{1/2}\left(\sup_{R\leq r_{1}}\frac{1}{R^{2}}\int_{|x|=R} |u|^{2}\right)^{1/2}\notag\\
& +\frac{1}{4}\int_{r_{1}\leq |x|\leq 1} \frac{|\D^{\bot}u|^{2}}{|x|} + \sigma|||u|||_{1}^{2}.\notag
\end{align}

The rest of the proof runs as before.
\end{proof}

\begin{remark}\label{landabig1}
Note that if we did not take $\lambda$ big enough, we would obtain
\begin{align}
|||\D u|||_{1}^{2} + \int \frac{|\D^{\bot}u|^{2}}{|x|}  + &(d-3)\int \frac{|u|^{2}}{|x|^{3}} + \sup_{R>0} \frac{1}{R^{2}}\int_{|x|=R}|u|^{2}d\sigma_{R}\notag\\
& \leq C(1+\varepsilon)\{|||u|||_{1}^{2} + (N_{1}(f))^{2}\}.\notag
\end{align}
\end{remark}

\begin{remark}\label{d=1,2}
Since singularities on the potentials at the origin are allowed, we reduce to the case $d\geq 3$. When $d=1,2$, the problems come from the terms (\ref{phipsi}) and (\ref{epsilonterm1}). Similar results to those in \cite{PV1}, section 5 could be obtained for weaker singularities in dimension $d=2$. 
\end{remark}

\subsection{Sommerfeld radiation condition}

Our next goal is to quantify the Sommerfeld radiation condition proving that it is upper bounded by the Agmon-H\"ormander norm of the solution. To this end, the basic idea is to build the full form of the Sommerfeld terms, using the integral identities given in Appendix. We emphasize that since the Sommerfeld condition is applied at infinity, it is sufficient to know the behavior of the potentials for $|x|\geq R$, $R$ big enough.

\begin{pro}\label{propositionradiation}
For $d\geq 3$, let $\lambda_{0} > 0$, $\varepsilon > 0$, $f\in L^{2}_{\frac{1+\delta}{2}}(\Rd) \cap L^{2}_{\delta}(\Rd)$ and suppose that (\ref{condicionf0}) holds. Then, there exists a positive constant $C = C(\lambda_{0},\mu)$ such that for all $\lambda \geq \lambda_{0}$ the solution $u\in H^{1}_{A}(\Rd)$ of the equation (\ref{2.1}) satisfies
\begin{align}\label{sommerfeld0}
\int_{|x|\geq 1}& \left|\D u- i\lambda^{1/2}\frac{x}{|x|}u\right|^{2} \left( \frac{1}{(1+|x|)^{1-\delta}} + \varepsilon(1+|x|)^{\delta} \right)\\
& \leq C(1+\varepsilon) \left[ |||u|||_{1}^{2} +  (N_{1}(f))^{2}\right] + C\int_{|x|\geq 1} \left\{(1+|x|)^{1+\delta} + \varepsilon (1+|x|)^{2\delta} \right\} |f|^{2},\notag
\end{align}
for all $0<\delta <2$ such that $\delta < \mu$, where $\mu$ is given in Assumption \ref{ass1}.
\end{pro}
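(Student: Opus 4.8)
The plan is to run a Morawetz/Rellich multiplier argument directly on (\ref{2.1}) with an \emph{unbounded} radial weight, tuned so that the Sommerfeld integrand appears as a full square on the left of an integral identity. The pointwise decomposition
\[
\Big|\D u-i\lambda^{1/2}\tfrac{x}{|x|}u\Big|^{2}=\big|\D^{r}u-i\lambda^{1/2}u\big|^{2}+|\D^{\bot}u|^{2}=|\D u|^{2}+\lambda|u|^{2}-2\lambda^{1/2}\Im\big(\D^{r}u\,\bar u\big)
\]
reduces the problem to reconstructing, on the left of such an identity, a nonnegative expression dominating $\int_{|x|\ge 1}(1+|x|)^{\delta-1}\big(|\D u-i\lambda^{1/2}\tfrac{x}{|x|}u|^{2}+\lambda|u|^{2}\big)$ together with its $\varepsilon$-weighted analogue. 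I would get this by pairing (\ref{2.1}) with the weighted ``outgoing'' multiplier $\psi'\big(\overline{\D^{r}u}-i\lambda^{1/2}\bar u\big)+\varphi\bar u$ --- equivalently, by combining the Appendix identities (\ref{(4.11)}), (\ref{(4.3)}) with the flux (Kato) identity obtained by pairing with $i\lambda^{1/2}\psi'\bar u$, which is what produces the cross term $-2\lambda^{1/2}\int\psi''\Im(\D^{r}u\,\bar u)$ --- taking real parts, and choosing the radial profile so that $\psi'(r)\sim(1+r)^{\delta}$ for $|x|$ large (hence $\psi''$, $\Delta\psi$, $\nabla\varphi$ are all of size $(1+|x|)^{\delta-1}$), while $\psi$ is constant and $\varphi\equiv 0$ on $\{|x|\le r_{0}\}$, so that only the decay (\ref{condicionf0}) on $\{|x|\ge r_{0}\}$ is ever used. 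The angular term $\int\tfrac{\psi'}{r}|\D^{\bot}u|^{2}$, with weight $\sim(1+|x|)^{\delta-1}$ and the favourable sign, then comes for free.

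Granting the coercivity bookkeeping (see below), the combined identity reads schematically
\[
\int_{|x|\ge r_{0}}\!(1+|x|)^{\delta-1}\Big|\D u-i\lambda^{1/2}\tfrac{x}{|x|}u\Big|^{2}+(\textrm{further nonnegative terms})=(\textrm{surface terms})+\mathrm{I}_{\varepsilon}+\mathrm{II}_{f}+\mathrm{III}_{\mathrm{pot}},
\]
the nonnegative part also containing $\int\tfrac{\psi'}{r}|\D^{\bot}u|^{2}$ and a term $\sim\lambda\int(1+|x|)^{\delta-1}|u|^{2}$. The surface terms at infinity vanish because $u\in H^{1}_{A}$ and the relevant weighted quantities are a priori finite, so the outer radius may be sent to infinity. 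If $r_{0}>1$, passing from $\int_{|x|\ge r_{0}}$ to $\int_{|x|\ge 1}$ only adds a contribution over the fixed compact annulus $\{1\le|x|\le r_{0}\}$, bounded by $C(r_{0})\int_{|x|\le r_{0}}(|\D u|^{2}+\lambda|u|^{2})$ and absorbed into $|||u|||_{1}^{2}+(N_{1}(f))^{2}$ by the local a priori estimates of the Appendix together with $\int_{|x|\le\rho}|u|^{2}\le\rho\,|||u|||_{1}^{2}$.

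For the right-hand side: $\mathrm{III}_{\mathrm{pot}}$ collects $\int\varphi V_{1}|u|^{2}$, $\int\psi'(\partial_{r}V_{1})_{-}|u|^{2}$, $\Im\int\psi'B_{\tau}\cdot\D^{\bot}u\,\bar u$ and the two $V_{2}$ terms; on $\{|x|\ge r_{0}\}$ each carries the factor $\tfrac{|V_{1}|}{|x|}+(\partial_{r}V_{1})_{-}+|B_{\tau}|+|V_{2}|\le c|x|^{-1-\mu}$, so against the weight $(1+|x|)^{\delta}$ and summed dyadically they give $c\sum_{j\ge j_{0}}2^{(\delta-\mu)j}(\cdots)$, which converges precisely because $\delta<\mu$ and is as small as we like once $r_{0}$ (hence $j_{0}$) is large enough; Cauchy--Schwarz and $ab\le\kappa a^{2}+\tfrac{1}{4\kappa}b^{2}$ then bound each such term by a small multiple of the left-hand quadratic quantities plus $C|||u|||_{1}^{2}$. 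In $\mathrm{II}_{f}$ the terms $\int(\varphi-\tfrac12\Delta\psi)f\bar u$, $\int f\,\psi'\,\overline{\D u}$ and $\lambda^{1/2}\Im\int\psi' f\bar u$ are split by Cauchy--Schwarz as $(1+|x|)^{(1+\delta)/2}|f|$ against $(1+|x|)^{(\delta-1)/2}|\D u|$, resp.\ $\lambda^{1/2}(1+|x|)^{(\delta-1)/2}|u|$, yielding $\kappa\int_{|x|\ge1}(1+|x|)^{\delta-1}\big(|\D u|^{2}+\lambda|u|^{2}\big)+C\int_{|x|\ge1}(1+|x|)^{1+\delta}|f|^{2}$; since $|\D u|^{2}\le 2|\D u-i\lambda^{1/2}\tfrac{x}{|x|}u|^{2}+2\lambda|u|^{2}$, the quadratic part sits on the left and is absorbed, while one also meets $N_{1}(f)\,|||u|||_{1}\le|||u|||_{1}^{2}+(N_{1}(f))^{2}$. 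Finally $\mathrm{I}_{\varepsilon}$ is handled exactly as in (\ref{epsilonterm1}): via (\ref{a+++0}) and (\ref{b+++0}) one controls $\varepsilon\int(1+|x|)^{\delta}(|u|^{2}+|u|\,|\D u|)$, which both produces the $\varepsilon(1+|x|)^{\delta}$ weight in the Sommerfeld integrand on the left and the overall factor $1+\varepsilon$ on the right.

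The crux is the coercivity bookkeeping in the first step: $\psi,\varphi$ must be chosen so that the Rellich term $\int\psi''|\D^{r}u|^{2}+\int\tfrac{\psi'}{r}|\D^{\bot}u|^{2}$, the zeroth-order term $\int\varphi\lambda|u|^{2}$, the cross term, the curvature correction $-\tfrac12\int(\Delta^{2}\psi)|u|^{2}$ and the boundary contributions assemble into exactly $(1+|x|)^{\delta-1}\big(|\D u-i\lambda^{1/2}\tfrac{x}{|x|}u|^{2}+c\lambda|u|^{2}+|\D^{\bot}u|^{2}\big)$ with a definite sign and with no uncontrolled negative $\lambda\int(1+|x|)^{\delta-1}|u|^{2}$ surviving. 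This is exactly where $\delta<2$ enters: for $\delta\ge2$ the weight $(1+r)^{\delta-1}$ grows too fast for the negative curvature contributions of size $(1+r)^{\delta-2}$, $(1+r)^{\delta-3}$ to be dominated and the form degenerates. As always with radial multipliers the magnetic potential $A$ only survives through $\Im\int\psi'B_{\tau}\cdot\D u\,\bar u=\Im\int\psi'B_{\tau}\cdot\D^{\bot}u\,\bar u$, absorbed as in (\ref{magnetico})--(\ref{apendixav2}) using only the tangential decay in (\ref{condicionf0}); this matches the hypotheses, which invoke (\ref{condicionf0}) alone.
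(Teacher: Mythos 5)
Your proposal is correct and follows essentially the same route as the paper: combining the Morawetz identity (\ref{(4.3)}) for $\psi'\sim(1+r)^{\delta}\theta_{r_{0}}$ with the symmetric identity (\ref{(4.11)}) (weight $\Psi''\theta_{r_{0}}$) and the Kato-flux identity $\lambda^{1/2}(\ref{(4.2)})$ (weight $\Psi'\theta_{r_{0}}$) to reconstitute the Sommerfeld square with weight $(1+r)^{\delta-1}$ on the left, with the $\varepsilon$-weighted square coming from the additional $-\frac{\varepsilon}{2\lambda^{1/2}}(\ref{(4.11)})$ piece, and the errors absorbed via (\ref{condicionf0}), $\delta<\mu$, and the coercivity valid for $0<\delta<2$. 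The only cosmetic difference is that the paper locates the $\delta<2$ obstruction primarily in the angular coefficient $\Psi'/r-\Psi''/2=\frac{2-\delta}{2}(1+r)^{\delta-1}$ (see Remark~\ref{remarkg(x)}), whereas you emphasize the $\Delta^{2}\psi$-type curvature terms; both observations are correct and the computations agree.
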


\begin{proof}
The proof consists in the construction of the squares of the left hand side of (\ref{sommerfeld0}). We use a combination of the identities of the lemmas \ref{appendix1} and \ref{appendix2}.  

Let us denote $r=|x|$ and we define a radial function $\Psi:\Rd \to \R$ by
\begin{equation}
\Psi(x)=\int_{0}^{|x|}\Psi'(s)ds,\notag
\end{equation}
with
\begin{equation}\label{del}
\Psi'(r)=(1+r)^{\delta}, \quad 0<\delta<2.
\end{equation}

Let us consider a cut off function $\theta \in C^{\infty}(\R)$ such that $0\leq \theta \leq 1$, $d\theta/dr \geq 0$ with 
\begin{displaymath}
\theta(r) = \left\{ \begin{array}{ll}
1 & \textrm{if $r \geq 2$}\\
0 & \textrm{if $r \leq 1$},
\end{array} \right.
\end{displaymath}
\noindent
and set $\theta(x)=\theta\left(|x|\right)$.

Let us compute
$$
(\ref{(4.3)}) + \frac{1}{2}(\ref{(4.11)}) + \lambda^{1/2}(\ref{(4.2)}) -\frac{\varepsilon}{2\lambda^{1/2}}(\ref{(4.11)})
$$
with the following choice of the multipliers
\begin{align}
&\nabla\psi(x) = \frac{x}{|x|} \Psi'(r)\theta(x)\notag\\
& \varphi(x) = \Psi''(r)\theta(x)\notag\\
& \varphi(x) = \Psi'(r)\theta(x)\notag\\
& \varphi(x) = \Psi'(r)\theta(x),\notag
\end{align}
respectively.

Note that by (\ref{del}) we have
\begin{equation}\notag
\frac{\Psi'}{r} - \frac{\Psi''}{2}> \frac{(2-\delta)}{2\delta}\Psi''.
\end{equation}
Thus since $0<\delta <2$, letting $\nu = \frac{2-\delta}{2\delta}>0$ and noting that
\begin{equation}\notag
\left|\D u - i \lambda^{1/2}\frac{x}{|x|}u \right|^{2} = |\D u|^{2} + \lambda|u|^{2} - 2\lambda^{1/2}\Im \frac{x}{|x|}\cdot \D u \bar{u},
\end{equation} 
we obtain
\begin{align}
&\frac{\delta}{2}\int (1+|x|)^{\delta-1}|\D^{r}u - i\lambda^{1/2}u|^{2}\theta +\delta\nu \int (1+|x|^{\delta-1})|\D^{\bot}u|^{2}\theta\label{ezkerra1}\\
&+\frac{1}{2}\int (1+|x|)^{\delta}\left( \theta'|\D^{r}u - i\lambda^{1/2}u|^{2} + \frac{\varepsilon}{\lambda^{1/2}}\left|\D u -i\lambda^{1/2}\frac{x}{|x|}u \right|^{2}\theta\right)\notag\\
& \leq \Re\int \nabla\left(\Psi'\theta' +\frac{(d-1)\Psi'\theta}{|x|} \right)\cdot\D u\bar{u} - \frac{\varepsilon\Re}{2\lambda^{1/2}}\int \nabla(\Psi'\theta)\cdot\D u\bar{u}\notag\\
& +\Im \int \Psi' B_{\tau}\cdot\D^{\bot}u\bar{u}\theta + \frac{1}{2}\int (\Psi'' V_{1} + \partial_{r}V_{1}\Psi')|u|^{2}\theta\notag\\
& +\frac{1}{2}\int \left(\frac{(d-1)\Psi'\theta}{|x|} + \Psi'\theta' \right)V_{2}|u|^{2} + \Re\int V_{2}\Psi'\D^{r}u\bar{u}\theta\notag\\
& -\Re\int f\Psi'\left\{\left[\theta \left( \frac{d-1}{2|x|} + \frac{\varepsilon}{2\lambda^{1/2}}\right) + \theta'\right]\bar{u} + (\D^{r}\bar{u} + i\lambda^{1/2}\bar{u})\theta\right\}\notag.
\end{align}

Let us now estimate the right hand-side of the above inequality applying similar arguments and using the same notation as in the proof of Theorem \ref{landagrande0}. Since
\begin{equation}\label{erreala}
\Re\D^{r} u\bar{u} = \Re (\D^{r}u-i\lambda^{1/2}u)\bar{u}
\end{equation}
and
$\delta <2$, the first term can be upper bounded
\begin{equation}\notag
\kappa\int |\D^{r}u -i \lambda^{1/2}u|^{2}\left((1+|x|)^{\delta-1}\theta + (1+|x|)^{\delta}\theta'\right) + C(\kappa)|||u|||_{1}^{2},
\end{equation}
for any $\kappa >0$. Concerning the $\varepsilon$ term, note that by integration by parts and the a priori estimate (\ref{a+++0}), we have
\begin{align}
-\frac{\varepsilon\Re}{2\lambda^{1/2}}\int \nabla(\Psi'\theta)\cdot \D u\bar{u} & = \frac{\varepsilon}{4\lambda^{1/2}}\int \Delta(\Psi'\theta)|u|^{2}\notag\\
& \leq \frac{C\varepsilon}{\lambda^{1/2}}\int_{|x|\geq 1} \frac{|u|^{2}}{(1+|x|)^{2-\delta}}\notag\\
& \leq \frac{C}{\lambda_{0}^{1/2}}N_{1}(f)|||u|||_{1}.\notag
\end{align}
We now pass to the terms containing the potentials. By (\ref{condicionf0}) it follows easily that for $\delta<\mu$ yields
\begin{align}
\frac{1}{2}\int \left[ (\Psi'' V_{1} + \partial_{r}V_{1}\Psi')\theta + \frac{(d-1)\Psi'\theta}{|x|}V_{2}+ \Psi'\theta'V_{2}\right]|u|^{2} \leq C|||u|||_{1}^{2}.\notag
\end{align}
If moreover, we apply the Cauchy-Schwarz inequality, then we get
\begin{equation}
\Im \int \Psi' B_{\tau}\cdot\D^{\bot}u\bar{u}\theta \leq C\left(\int |\D^{\bot}u|^{2}(1+|x|)^{\delta-1}\theta \right)^{1/2}|||u|||_{1}\notag
\end{equation}
and combining with (\ref{erreala}), gives
\begin{equation}
\Re\int V_{2}\Psi'\D^{r}u\bar{u}\theta \leq C\left(\int |\D^{r}u - i\lambda^{1/2}u|^{2}(1+|x|)^{\delta-1}\theta \right)^{1/2}|||u|||_{1}\notag.
\end{equation}
Thus the potential terms can be estimated by
\begin{equation}
\kappa \int (1+|x|)^{\delta-1} \left( |\D^{\bot}u|^{2} + |\D^{r}u-i\lambda^{1/2}u|^{2}\right)\theta + C|||u|||_{1}^{2}.\notag
\end{equation}
Finally, applying the same reasoning to the terms containing $f$, we obtain that they are upper bounded by
\begin{align}
& \kappa\int (1+|x|)^{\delta-1}|\D^{r}u -i\lambda^{1/2}u|^{2}\theta + C(\kappa)\int (1+|x|)^{1+\delta}|f|^{2}\theta \notag\\
& +C|||u|||_{1}\left(\int (1+|x|)^{1+\delta}|f|^{2}\theta \right)^{1/2}\notag\\
& +\frac{C}{\lambda^{1/2}}|||u|||_{1}^{1/2}(N_{1}(f))^{1/2}\left(\varepsilon\int (1+|x|)^{2\delta}|f|^{2}\theta \right)^{1/2}\notag.
\end{align}

As a consequence, choosing $\kappa$ small enough, we deduce (\ref{sommerfeld0}) and the proof is over.

\end{proof}

\begin{remark}
Recall from Remark \ref{r01} that we have been working under the condition that $r_{0}=1$ in Assumption \ref{ass1}. The same conclusion can be drawn for a general $r_{0}$. In this case, one should set $\theta_{r_{0}}(x) =\theta\left(\frac{|x|}{r_{0}} \right)$ and replace $\theta(x)$ by $\theta_{r_{0}}(x)$  in the above proof.
\end{remark}

\begin{remark}\label{remarkg(x)}
Observe that the previous proof does not work neither for the $\delta=0$ case, nor for the $\delta=2$ case. When $\delta=0$, $\Psi'(|x|)=1$ and $\Psi''(|x|)=0$. Then, we would not obtain the main square in the left hand side of the inequality. On the other hand, when $\delta=2$, the problem comes from the term 
\begin{equation}\notag
\Re\int \nabla\left(\Psi'\theta' +\frac{(d-1)\Psi'\theta}{|x|} \right)\cdot\D u\bar{u}.
\end{equation}
If $\Psi'(r) = (1+r)^{2}$ one needs to estimate the term $\int_{|x|\geq 1} \frac{|u|^{2}}{|x|}$, which is not upper bounded by $|||u|||_{1}^{2}$. Moreover, we do not get the estimate for the tangential component of the magnetic gradient and thus we are not able to absorb the term containing the magnetic field. The $\delta = 2$ case is particularly interesting and needs special attention so that it will be studied elsewhere. Both $\delta = 0$ and $\delta = 2$ cases have been studied in \cite{Zu}.
\end{remark}

\begin{remark}
Similarly, one could get the following version of the Sommerfeld radiation condition
\begin{align}\label{annulus}
\int_{C(j)}& \left| \D u - i\lambda^{1/2}\frac{x}{|x|}u \right|^{2} \frac{1}{(1+|x|)^{1-\delta}} \leq C(1+\varepsilon)\left[|||u|||_{1}^{2} + |||\D u|||_{1}^{2} + (N_{1}(f))^{2} \right]\\
& + C\int_{2^{j-2} \leq |x| \leq 2^{j+1}} \left[ (1+|x|)^{1+\delta} + \varepsilon (1+|x|)^{2\delta}\right] |f|^{2}\notag,
\end{align}
for any $j$ such that $j_{0} \leq j \leq j_{1}$ where $j_{0}, j_{1} > 0$ are fixed. This inequality will be very useful in what follows (see (\ref{conclusion0}) below). 

In order to get (\ref{annulus}), one only needs to define the cut-off function $\theta$ as
\begin{displaymath}
\theta(r) = \left\{ \begin{array}{ll}
0 & \textrm{if $r \geq 2$}\\
1 & \textrm{if $\frac{1}{2} \leq r \leq 1$}\\
0 & \textrm{if $r \leq \frac{1}{4}$}.
\end{array} \right.
\end{displaymath} 
and set $\theta_{j}(x) = \theta\left(\frac{|x|}{2^{j}} \right)$. Then we put $\theta_{j}$ instead of $\theta(x)$ in the definition of the multipliers above. The only difference is that in this case $\theta' \leq  0$ if $1 \leq r \leq 2$ so that one needs to estimate the term containing $\theta'$ in the left hand side of (\ref{ezkerra1}). The details are left to the reader.
\end{remark}

\subsection{Compactness argument when $\lambda \in [\lambda_{0}, \lambda_{1}]$}
Our next objective is to show that for any $\lambda \in [\lambda_{0}, \lambda_{1}]$, 
\begin{equation}\label{comp}
\lambda|||u|||_{1}^{2} \leq C (N_{1}(f))^{2}.
\end{equation}
In order to get this estimate, we begin by proving the following a priori estimate, which is a consequence of assumption (\ref{assumptionself}).

\begin{lem}\label{int}
For each $R>0$ any solution $u\in H^{1}_{A}(\Rd)$ of the equation (\ref{2.1}) satisfies
\begin{align}\label{elliptic1}
\int_{|x|\leq R} |\nabla_{A} u|^{2} \leq C(1+\lambda)\int_{|x|\leq R+1} |u|^{2} + \int_{|x|\leq R+1} |f|^{2}.
\end{align}
\end{lem}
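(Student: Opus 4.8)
The plan is to prove the local elliptic estimate \eqref{elliptic1} by a standard energy (Caccioppoli-type) argument applied directly to equation \eqref{2.1}, using a smooth cutoff localized to the ball $\{|x|\le R+1\}$. First I would fix a cutoff $\chi\in C_c^\infty(\Rd)$ with $0\le\chi\le 1$, $\chi\equiv 1$ on $\{|x|\le R\}$, $\mathrm{supp}\,\chi\subset\{|x|\le R+1\}$ and $|\nabla\chi|\le C$. Since $u\in H^1_A(\Rd)$ solves \eqref{2.1}, I would pair the equation with $\chi^2\bar u$ in the $L^2$ sense; this is justified because $\chi^2\bar u\in H^1_A$ with compact support. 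Integrating by parts in the leading term $(\nabla+iA)^2 u$ (moving one $\nabla_A$ onto $\chi^2\bar u$ via the identity $\int \nabla_A^2 u\,\overline{v} = -\int \nabla_A u\cdot\overline{\nabla_A v}$ for $v=\chi^2 u$, which holds for $A\in L^2_{loc}$ and compactly supported $v$) yields
\begin{equation}\notag
-\int \chi^2|\nabla_A u|^2 - 2\int \chi(\nabla\chi)\cdot\nabla_A u\,\bar u + \int(V_1+V_2)\chi^2|u|^2 + (\lambda+i\varepsilon)\int\chi^2|u|^2 = \int \chi^2 f\bar u.
\end{equation}

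Next I would take real parts and rearrange to isolate $\int\chi^2|\nabla_A u|^2$. The cross term is controlled by Cauchy--Schwarz and the inequality $ab\le\kappa a^2+\frac{1}{4\kappa}b^2$: $2|\int\chi(\nabla\chi)\cdot\nabla_A u\,\bar u|\le \kappa\int\chi^2|\nabla_A u|^2 + C(\kappa)\int_{|x|\le R+1}|u|^2$, absorbing the first piece into the left side. The term $\lambda\int\chi^2|u|^2$ is already of the desired form, and $\int\chi^2 f\bar u\le \int_{|x|\le R+1}|f|^2 + \int_{|x|\le R+1}|u|^2$. The only genuinely delicate point is the potential term $\int(V_1+V_2)\chi^2|u|^2$: since $V_1,V_2\in L^1_{loc}$ and satisfy \eqref{assumptionself}, I would invoke the form-boundedness hypothesis $\int(V_1+V_2)|u|^2 < \int|\nabla u|^2$ (with relative bound strictly less than one), applied to $\chi u$; combined with the diamagnetic inequality \eqref{diamagnetic}, this gives $\int(V_1+V_2)\chi^2|u|^2 \le (1-\eta)\int|\nabla(\chi u)|^2 \le (1-\eta)\int|\nabla_A(\chi u)|^2$ for some $\eta>0$, and expanding $\nabla_A(\chi u)=\chi\nabla_A u+(\nabla\chi)u$ lets me absorb the $(1-\eta)\int\chi^2|\nabla_A u|^2$ into the left-hand side at the cost of another $C\int_{|x|\le R+1}|u|^2$ term from $|\nabla\chi|^2|u|^2$ and the cross term.

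Collecting everything, I obtain $\eta\int\chi^2|\nabla_A u|^2 \le C(1+\lambda)\int_{|x|\le R+1}|u|^2 + \int_{|x|\le R+1}|f|^2$, and since $\chi\equiv 1$ on $\{|x|\le R\}$ this yields \eqref{elliptic1} after dividing by $\eta$ and renaming constants. I expect the main obstacle to be the careful handling of the low-regularity potentials and the magnetic potential $A\in L^2_{loc}$: one must ensure all integrations by parts are legitimate (which they are for compactly supported $H^1_A$ functions) and that the form-boundedness in \eqref{assumptionself} is used with a strict relative bound $<1$ so that the magnetic kinetic energy can actually be absorbed rather than merely bounded. Everything else is routine Cauchy--Schwarz bookkeeping with the $\kappa$-inequality quoted in the Notation paragraph.
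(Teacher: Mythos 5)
Your proposal follows essentially the same route as the paper: introduce a cutoff supported in $\{|x|\le R+1\}$, pair the equation with the localized conjugate solution, integrate by parts, and absorb the potential term using the form-boundedness hypothesis \eqref{assumptionself} together with the diamagnetic inequality. The only substantive difference is organizational: the paper first writes the equation satisfied by $\psi u$ (picking up commutator terms $u\Delta\psi + 2\nabla_A u\cdot\nabla\psi$) and then pairs with $\psi\bar u$, so that the magnetic kinetic term arrives directly as $\int|\nabla_A(\psi u)|^2$; you pair the original equation with $\chi^2\bar u$ and get $\int\chi^2|\nabla_A u|^2$ plus cross terms. Both work, though the paper's arrangement lets the potential bound $\int|V_1+V_2||\psi u|^2 < (1-\eta)\int|\nabla_A(\psi u)|^2$ be absorbed without further Cauchy--Schwarz; your version needs one more expansion of $\nabla_A(\chi u)=\chi\nabla_A u+(\nabla\chi)u$ with a small-$\kappa$ inequality so that $(1-\eta)(1+2\kappa)<1$.

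There is one genuine slip you should correct. You wrote
\[
\int(V_1+V_2)\chi^2|u|^2 \le (1-\eta)\int|\nabla(\chi u)|^2 \le (1-\eta)\int|\nabla_A(\chi u)|^2,
\]
attributing the second inequality to the diamagnetic inequality. But \eqref{diamagnetic} gives the pointwise bound $|\nabla|f||\le|\nabla_A f|$, i.e.\ it controls the gradient of the \emph{modulus}, not the full complex gradient. Indeed $\int|\nabla v|^2 \le \int|\nabla_A v|^2$ is false in general (write $v=\rho e^{i\theta}$; then $|\nabla_A v|^2 = |\nabla\rho|^2 + \rho^2|\nabla\theta+A|^2$, which can be smaller than $|\nabla v|^2 = |\nabla\rho|^2 + \rho^2|\nabla\theta|^2$). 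The correct chain, which is what the paper tacitly does, is to apply the form-boundedness to the nonnegative function $|\chi u|$:
\[
\int(V_1+V_2)|\chi u|^2 = \int(V_1+V_2)\bigl||\chi u|\bigr|^2 \le (1-\eta)\int\bigl|\nabla|\chi u|\bigr|^2 \le (1-\eta)\int|\nabla_A(\chi u)|^2,
\]
using the diamagnetic inequality only in the last step. With that replacement your argument goes through.
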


\begin{proof}
Let $\psi \in C^{\infty}_{0}$ such that $0\leq \psi \leq 1$ and
\begin{equation}\notag
\psi(x) = \left\{ \begin{array}{ll}
1 & \textrm{if $|x| \leq R$},\\
0 & \textrm{if $|x| \geq R+1$}.
\end{array} \right.
\end{equation}
Note that any solution $u\in H^{1}_{A}(\Rd)$ of the equation (\ref{2.1}) satisfies
\begin{equation}\notag
(\nabla_{A}^{2} + V_{1} + V_{2} + \lambda + i\varepsilon) (\psi u) = \psi f + u\Delta\psi + 2\D u\cdot \nabla \psi.
\end{equation}
Let us multiply the above identity by $\psi\bar{u}$, integrate over $\Rd$ and take the real part. Hence, by integration by parts we get
\begin{align}
\int |\D(\psi u)|^{2} & \leq \lambda\int_{|x|\leq R+1} |u|^{2} + \int (V_{1} +V_{2})|\psi u|^{2} + \int_{|x|\leq R+1} |f||u|\notag\\
& + \int |\D(\psi u)||\nabla\psi||u| + \int |\psi||\Delta \psi| |u|^{2}.\notag
\end{align}
Now by the assumption (\ref{assumptionself}) on the potentials $V_{1}, V_{2}$ and the diamagnetic inequality (\ref{diamagnetic}) we have
\begin{align}\notag
\int (V_{1} + V_{2})|\psi u|^{2} &< \int |\D(\psi u)|^{2}.
\end{align}
Hence, by Cauchy-Schwarz inequality it follows that
\begin{align}\label{ellipticgradient}
\int |\D (\psi u)|^{2} \leq C(1+\lambda)\int_{|x|\leq R+1} |u|^{2} + \int_{|x|\leq R+1} |f|^{2},
\end{align}
which gives (\ref{elliptic1}) and the lemma follows.

\end{proof}

\begin{remark}
Note that since
\begin{align}\notag
\int |\nabla (\psi u)|^{2} \leq C\int (|\D (\psi u)|^{2} + |A\psi u|^{2}),
\end{align}
applying the condition (\ref{extracondition}) on the magnetic potential $A$ to $|u|$, then by the diamagnetic inequality (\ref{diamagnetic}), it follows that
\begin{equation}\notag
\int |\nabla (\psi u)|^{2} \leq C\int |\D (\psi u)|^{2}.
\end{equation}
This combined with (\ref{ellipticgradient}) gives the well known elliptic a priori estimate
\begin{align}\notag
\int_{|x|\leq R} |\nabla u|^{2} \leq C(1+\lambda)\int_{|x|\leq R+1} |u|^{2} + \int_{|x|\leq R+1} |f|^{2}
\end{align}
for solutions of the equation (\ref{2.1}).
\end{remark}

Having disposed of this preliminary step, we can return to show (\ref{comp}).

\begin{pro}\label{proposition}
For $d\geq 3$, let $\lambda_{0} > 0$, $\lambda \in [\lambda_{0}, \lambda_{1}]$, with $\lambda_{1} >\lambda_{0}$ and $\varepsilon \in (0, \varepsilon_{1})$. Under the assumptions of Proposition \ref{propositionradiation} above, if moreover (\ref{extracondition}) holds, then the solution of the equation (\ref{2.1}) satisfies
\begin{equation}\label{landapeque–o0}
\lambda|||u|||_{1}^{2} + |||\D u|||_{1}^{2}  \leq C (1+\varepsilon) (N_{1}(f))^{2},
\end{equation}
where $C$ is independent of $\varepsilon$.
\end{pro}

\begin{proof}
Our proof starts recalling that
\begin{equation}\notag
\left|\D u - i\lambda^{1/2}\frac{x}{|x|}u\right|^{2} = |\D u|^{2} + \lambda|u|^{2} -2\Im \lambda^{1/2}\frac{x}{|x|}\cdot \D u \bar{u}.
\end{equation}
Let us integrate the above identity over the sphere $S_{r}:=\{|x|=r\}$, obtaining
\begin{align}\label{3.7}
\int_{S_{r}} (\lambda|u|^{2} + |\D u|^{2}) d\sigma_{r} &= \int_{S_{r}} \left|\D u-i\lambda^{1/2}\frac{x}{|x|}u\right|^{2} d\sigma_{r}+ 2\Im\lambda^{1/2}\int_{S_{r}}\D^{r} u \bar{u} d\sigma_{r}.
\end{align}

Let us multiply now equation $(\ref{2.1})$ by $\bar{u}$, integrate it over the ball $B_{r}:=\{|x|\leq r\}$ and take the imaginary part. Since $\varepsilon >0$, it follows that
\begin{equation}\notag
\Im\int_{S_{r}} \D^{r} u\bar{u} d\sigma_{r} \leq \Im\int_{B_{r}}f\bar{u}.
\end{equation}
Combining this with (\ref{3.7}) yields
\begin{equation}\label{3.71}
\int_{S_{r}} (\lambda|u|^{2} + |\D u|^{2}) d\sigma_{r} \leq \int_{S_{r}} \left|\D u-i\lambda^{1/2}\frac{x}{|x|}u\right|^{2} d\sigma_{r} +2\Im\lambda^{1/2}\int_{B_{r}}f\bar{u}.
\end{equation}

Now, let $R > \rho \geq 1$ and denote $j_{0}$ and $j_{1}$ by $2^{j_{0}-1} < \rho < 2^{j_{0}}$ and $2^{j_{1}-1} < R < 2^{j_{1}}$, respectively. Let us multiply both sides of (\ref{3.71}) by $\frac{1}{R}$ and integrate from $\rho$ to $R$ with respect to $r$. Then we have
\begin{align}\label{concl}
\frac{1}{R}\int_{\rho \leq |x| \leq R} (\lambda|u|^{2} + |\D u|^{2})  & \leq \frac{1}{R}\sum_{j=j_{0}}^{j_{1}} \int_{C(j)}\left|\D u -i \lambda^{1/2}\frac{x}{|x|}u\right|^{2}\\
& + \kappa\lambda|||u|||_{1}^{2} + C(\kappa)(N_{1}(f))^{2}\notag\\
& \equiv I_{1} + I_{2},\notag
\end{align}
for $\kappa > 0$ and by (\ref{annulus}) we get 
\begin{align}\label{conclusion0}
I_{1} & \leq \frac{1}{R}\sum_{j=j_{0}}^{j_{1}}(1+2^{j})^{1-\delta}\int_{C(j)}\frac{1}{(1+2^{j})^{1-\delta}} \left|\D u -i \lambda^{1/2}\frac{x}{|x|}u\right|^{2} \\
& \leq C(1+\varepsilon)\sum_{j=j_{0}}^{j_{1}} \frac{(1+2^{j})^{1-\delta}}{2^{j_{1}}}(|||u|||_{1}^{2} +(N_{1}(f))^{2})\notag\\
& + C(1+\varepsilon)\sum_{j=j_{0}}^{j_{1}} \frac{(1+2^{j})^{1-\delta}}{2^{j_{1}}} \int_{2^{j}\geq \frac{1}{2}} \left[ (1+2^{j})^{1+\delta} + \varepsilon(1+ 2^{j})^{2\delta} \right] |f|^{2} \notag\\
& \leq C(1+\varepsilon)\left[ \sum_{j=j_{0}}^{j_{1}} 2^{-\delta j}|||u|||_{1}^{2} + \left(1+\sum_{j=j_{0}}^{j_{1}} \frac{(1+2^{j}) + \varepsilon (1+2^{j})^\delta}{2^{j_{1}}} \right)(N_{1}(f))^{2}\right]. \notag
\end{align}
As a consequence, from (\ref{concl}) and (\ref{conclusion0}), taking $\kappa$ small enough and $\rho$ big enough, we deduce
\begin{equation}\notag
\frac{1}{R}\int_{\rho \leq |x| \leq R} (\lambda|u|^{2} + |\D u|^{2})  \leq \frac{\lambda}{2} |||u|||_{1}^{2} + C(1+\varepsilon)(N_{1}(f))^{2}.
\end{equation}

It remains to prove that
\begin{equation}\label{contradiccion}
\int_{|x|\leq \rho} (\lambda|u|^{2} + |\D u|^{2})  \leq C(N_{1}(f))^{2}.
\end{equation}
Let us assume that (\ref{contradiccion}) is false. Then, for each $n \in \mathbb{N}$, there exist $\varepsilon_{n} \in (0, \varepsilon_{1})$ with $0<\varepsilon_{1} < \infty$, $\lambda_{n}\in [\lambda_{0}, \lambda_{1}]$ and $u_{n}, f_{n}$ such that
\begin{equation}\notag
(\nabla + iA)^{2}u_{n} + (V_{1} + V_{2})u_{n} + \lambda_{n}u_{n} + i\varepsilon_{n}u_{n} = f_{n},
\end{equation}
with
\begin{equation}\label{A}
\int_{|x| \leq \rho} (\lambda_{n}|u_{n}|^{2} + |\D u_{n}|^{2}) =1
\end{equation} 
and
\begin{equation}\label{B}
N_{1}(f_{n}) \leq \frac{1}{n} \quad \quad \left(\lim_{n \to \infty} N_{1}(f_{n}) = 0\right).
\end{equation}
Since $\lambda_{n} \in [\lambda_{0}, \lambda_{1}]$ and $\varepsilon_{n} \in (0 , \varepsilon_{1})$, we may assume with no loss of generality that $\lambda_{n} \to \lambda^{0}$ and $\varepsilon_{n} \to \varepsilon^{0}$ where $\lambda^{0} \in [\lambda_{0}, \lambda_{1}]$, $\varepsilon^{0} \in [0, \varepsilon_{1}]$, as $n$ tends to $\infty$.

On the other hand, from (\ref{A}) and condition (\ref{extracondition}) on $A$, one can easily deduce that $\{u_{n}\}$ is a bounded sequence in $H^{1}_{loc}(\Rd)$. Hence, by the Rellich-Kondrachov theorem, one can conclude that there exists a subsequence of $u_{n}$, $u_{n_{p}}$, such that
\begin{equation}\notag
u_{n_{p}} \to u \quad \textrm{in} \quad L^{2}_{loc}(\Rd), \quad \textrm{as} \quad p \to \infty, \quad \textrm{with} \quad u\in L^{2}_{loc}(\Rd),
\end{equation}
which implies
\begin{equation}\notag
\sup_{R>1}\frac{1}{R}\int_{|x|\leq R} |u_{n_{p}} - u|^{2}dx \quad \quad \to \quad \quad 0
\end{equation}
and by (\ref{B}) it follows that
\begin{equation}\notag
((L+\lambda^{0}+i\varepsilon^{0})u, \varphi) = (0, \varphi) \quad \forall \varphi \in C_{0}^{\infty}.
\end{equation}
Moreover, if we denote $v_{n} = u_{n_{p}}-u$, since
\begin{align}
g_{p}&\equiv (L+\lambda^{0}+i\varepsilon^{0})v_{n}\notag\\
&= i(\varepsilon^{0} -\varepsilon_{n_{p}})u_{n_{p}} + (\lambda^{0}-\lambda_{n_{p}})u_{n_{p}} + f_{n_{p}} - (L+\lambda^{0} + i\varepsilon^{0})u,\notag
\end{align}
applying Lemma \ref{int} to $v_{n}$ and $g_{p}$, one can deduce for $R>0$
\begin{align}
\int_{|x|\leq R} |\D v_{n}|^{2} & \leq C(1+\lambda)\int_{|x|\leq R+1} |v_{n}|^{2} + \int_{|x|\leq R+1} |g_{p}|^{2}.\notag
\end{align}
Hence,
\begin{equation}\label{3.73}
\D u_{n_{p}}  \to \D u \quad \textrm{in} \quad L^{2}_{loc}(\Rd), \quad \textrm{as} \quad p \to \infty, \quad \textrm{with} \quad \D u\in L^{2}_{loc}(\Rd).
\end{equation}
As a consequence, by (\ref{A}) $u$ satisfies
\begin{equation}\label{contra}
\int_{|x|\leq \rho} (\lambda|u|^{2} + |\D u|^{2}) =1
\end{equation}
and
\begin{equation}\label{0010}
(\nabla + iA)^{2}u + (V_{1} + V_{2})u + \lambda^{0} u + i\varepsilon^{0}u = 0
\end{equation}
in the distributional sense. Thus by uniqueness of solution of the equation (\ref{0010}), we conclude that $u\equiv 0$, which contradicts (\ref{contra}).

We have thus proved that for $R > 1$
\begin{equation}\notag
\frac{1}{R}\int_{|x|\leq R} (\lambda|u|^{2} + |\D u|^{2})  \leq \frac{\lambda}{2}|||u|||_{1}^{2} + C(1+\varepsilon)(N_{1}(f))^{2}.
\end{equation}
Taking the supremum over $R\geq 1$, we get (\ref{landapeque–o0}) and the proof is complete.

\end{proof}

\subsection{Limiting absorption principle}\label{LAPIS}
Our next concern will be the existence of solution of the equation (\ref{res}), which is stated in the following lemma.
\begin{lem}\label{Lemma1.11}
Let $\lambda >0$, $\{u_{n}\}$ be a sequence such that for any $\rho>0$
\begin{equation}\label{somsom}
\int_{|x|\leq\rho} (\lambda|u_{n}|^{2} + |\D u_{n}|^{2})  < +\infty
\end{equation}
and let $\varepsilon_{n} \in (0,1)$ be a convergent sequence with $\varepsilon_{n} \to 0$ as $n\to \infty$, $f$ such that $N_{1}(f)<\infty$. Assume that
\begin{align}
&(L + \lambda + i\varepsilon_{n})u_{n} = f\notag
\end{align}
and $\{u_{n}\}$ satisfies the radiation condition
\begin{equation}
\int_{|x|\geq 1} \left|\D u_{n} -i\lambda^{1/2}\frac{x}{|x|}u_{n} \right|^{2}(1+|x|)^{\delta-1} < + \infty
\end{equation}
for some $\delta >0$ and for all $n=1, 2, \ldots$. Under the assumptions of Theorem \ref{unicidad1}, if moreover (\ref{extracondition}) holds, then $\{u_{n}\}$ has a strong limit $u$ in $(H^{1}_{A})_{loc}(\Rd)$ such that satisfies
\begin{align}
& (L + \lambda)u = f\notag\\
& \int_{|x|\leq \rho} (\lambda|u|^{2} + |\D u|^{2})  < +\infty\notag\\
&\int_{|x|\geq 1} \left|\D u -i\lambda^{1/2}\frac{x}{|x|}u \right|^{2}(1+|x|)^{\delta-1} < + \infty,\notag
\end{align}
for $\delta >0$.
\end{lem}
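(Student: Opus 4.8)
The plan is to pass to the limit in the equations $(L+\lambda+i\varepsilon_n)u_n = f$ using the uniform a-priori estimates established in the previous subsections, and then to identify the limit with the help of the uniqueness Theorem \ref{unicidad1}. First I would argue that, up to extracting a subsequence, $\{u_n\}$ is bounded in the relevant norms. Indeed, combining the compactness Proposition \ref{proposition} (valid for $\lambda\in[\lambda_0,\lambda_1]$) with the large-$\lambda$ estimate of Theorem \ref{landagrande0} (resp. Theorem \ref{3dlandagrande0}), and with the quantified radiation estimate (\ref{sommerfeld0}) of Proposition \ref{propositionradiation}, one obtains that $\lambda|||u_n|||_1^2 + |||\D u_n|||_1^2$, the weighted radiation integral, and (via Lemma \ref{int}) the local $H^1_A$-norms $\int_{|x|\le R}(\lambda|u_n|^2 + |\D u_n|^2)$ are all bounded uniformly in $n$ by $C(N_1(f))^2$, since $f\in L^2_{(1+\delta)/2}$ forces $N_1(f)<\infty$. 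Here it is essential that the constants in these estimates do not blow up as $\varepsilon_n\to 0$, which is precisely the content of those results.

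Next, using the uniform local bound on $\int_{|x|\le R}|\nabla_A u_n|^2$ together with condition (\ref{extracondition}) and the diamagnetic inequality (so that $\int_{|x|\le R}|\nabla u_n|^2$ is bounded), the sequence $\{u_n\}$ is bounded in $H^1_{loc}(\Rd)$. By the Rellich--Kondrachov theorem and a diagonal extraction, there is a subsequence (still denoted $u_n$) and a function $u\in L^2_{loc}$ with $u_n\to u$ in $L^2_{loc}$. To upgrade this to convergence of the magnetic gradients I would apply Lemma \ref{int} to the differences $v_{n,m}=u_n-u_m$: since $(L+\lambda+i\varepsilon_n)v_{n,m} = i(\varepsilon_m-\varepsilon_n)u_m$, whose right-hand side goes to $0$ in $L^2_{loc}$ as $n,m\to\infty$ because $\varepsilon_n$ is Cauchy and $u_m\to u$ in $L^2_{loc}$, the estimate (\ref{elliptic1}) shows $\{\D u_n\}$ is Cauchy in $L^2_{loc}$, hence $\D u_n\to \D u$ in $L^2_{loc}$. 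Thus $u_n\to u$ strongly in $(H^1_A)_{loc}$, and $u$ satisfies $\int_{|x|\le\rho}(\lambda|u|^2 + |\D u|^2)<\infty$ for every $\rho>0$.

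Passing to the limit in the weak formulation $(u_n,(L+\lambda-i\varepsilon_n)\varphi) = (f,\varphi)$ for $\varphi\in C_0^\infty(\Rd)$ and using $\varepsilon_n\to 0$ together with local convergence shows $(L+\lambda)u = f$ in the distributional sense. For the radiation condition, I would use lower semicontinuity of the weighted $L^2$ norm under the convergence at hand together with the uniform bound from (\ref{sommerfeld0}): for each fixed $R$ the integral $\int_{|x|\le R}|\D u - i\lambda^{1/2}\frac{x}{|x|}u|^2(1+|x|)^{\delta-1}$ is the $L^2_{loc}$-limit of the corresponding integrals for $u_n$, which are bounded uniformly in $n$ and $R$; letting $R\to\infty$ by monotone convergence gives the global bound, so $u$ satisfies the stated radiation condition. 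The last point to address is that the extracted limit does not depend on the subsequence, and that the whole sequence converges: this is where Theorem \ref{unicidad1} enters — any two subsequential limits $u,u'$ are solutions of the homogeneous equation $(L+\lambda)(u-u')=0$ satisfying the radiation condition, hence by the hypotheses of that theorem $u-u'\equiv 0$; a standard subsequence argument then promotes convergence of the full sequence. I expect the main obstacle to be the careful bookkeeping needed to see that the limit genuinely inherits the radiation condition (the weight $(1+|x|)^{\delta-1}$ is only borderline integrable-friendly, so one must argue via a sup over $R$ and lower semicontinuity rather than any naive passage to the limit), and, secondarily, verifying that the $\varepsilon_n$-dependent error terms truly vanish in $L^2_{loc}$ so that Lemma \ref{int} can be applied to the differences.
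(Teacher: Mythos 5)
Your proposal follows essentially the same route as the paper's proof: extract a subsequence converging in $L^2_{loc}$ by Rellich--Kondrachov, upgrade to $(H^1_A)_{loc}$-convergence via the elliptic estimate of Lemma \ref{int} applied to differences, pass to the limit in the weak formulation to obtain $(L+\lambda)u=f$, obtain finiteness of the weighted radiation integral for the limit, and finally use Theorem \ref{unicidad1} to show the whole sequence converges (the paper phrases this last step as an explicit contradiction argument on a hypothetical subsequence staying a positive distance from $u$, but this is the same standard subsequence trick you describe). Two small remarks: (a) your appeal to the quantitative estimate (\ref{sommerfeld0}) to get a uniform bound on the radiation integral implicitly uses $f\in L^2_{(1+\delta)/2}$, which is not literally in the lemma's hypothesis (the lemma only assumes $N(f)<\infty$ and per-$n$ finiteness of the radiation integral) but is present in the application in Theorem \ref{LAP}, and the paper's own treatment of this point — asserting convergence of $\mathcal{D}u_{n_p}$ in $L^2_{(\delta-1)/2}(E_1)$ — is no more rigorous, so this is a shared imprecision rather than a defect of your proposal; (b) the paper applies Lemma \ref{int} to $v_p=u_{n_p}-u$ rather than to $u_n-u_m$, but these are equivalent ways of extracting Cauchyness of $\D u_n$ in $L^2_{loc}$. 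In sum, the argument is correct and matches the paper's.
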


\begin{proof}
This follows by the compactness argument, in much the same way as in the proof of Proposition \ref{proposition}. Since $\varepsilon_{n} \to 0$ as $n\to \infty$, the same reasoning applies to this case and we deduce that there exists a subsequence of $u_{n}$, $u_{n_{p}}$, such that $u_{n_{p}} \to u$ in $(H^{1}_{A})_{loc}(\Rd)$ as $p\to \infty$ where $u\in (H^{1}_{A})_{loc}(\Rd)$ and satisfies
\begin{align}
& (\nabla + iA)^{2} u + (V_{1} + V_{2})u + \lambda u = f,\notag\\
& \int_{|x|\leq \rho} (\lambda|u|^{2} + |\D u|^{2}) < \infty.\notag
\end{align}

In addition, if we denote $\mathcal{D}u = \D u - i\lambda^{1/2}\frac{x}{|x|}u$, we also get that $\mathcal{D} u_{n_{p}}$ converges to $\mathcal{D}u$ in $L^{2}_{loc}(E_{1})$, where $E_{1}=\{|x|\geq 1\}$. As a consequence, we obtain $\mathcal{D}u_{n_{p}} \to \mathcal{D}u$ in $L^{2}_{\frac{\delta-1}{2}}(E_{1})$ satisfying $\int_{|x|\geq 1} \left|\D u - i\lambda^{1/2}\frac{x}{|x|}u \right|^{2}(1+|x|)^{\delta-1} < \infty.$

Finally, we shall show that the sequence $\{u_{n}\}$ itself converges in $(H^{1}_{A})_{loc}(\Rd)$ to the $u$ obtained above, which in turn implies that $\{\mathcal{D}u_{n}\}$ converges to $\{\mathcal{D}u\}$ in $L^{2}_{loc}(E_{1})$.  In fact, let us assume that there exists a subsequence $\{n_{q}\}$ of $\{n\}$ such that
\begin{equation}\label{u}
\Vert u- u_{n_{q}} \Vert_{L^{2}_{loc}} + \Vert \D u- \D  u_{n_{q}} \Vert_{L^{2}_{loc}}  \geq \gamma \quad (q=1,2,\ldots)
\end{equation}
with some $\gamma > 0$. Then, proceeding as above, we can find a subsequence $\{n^{'}_{q}\}$ of $\{n_{q}\}$ which satisfies
\begin{equation}\label{un}
u_{n^{'}_{q}} \to u^{'} \quad \textrm{in} \quad (H^{1}_{A})_{loc}(\Rd),
\end{equation}
$u^{'}$ being a solution in $(H^{1}_{A})_{loc}(\Rd)$ of $\D^{2}u^{'} + \lambda u^{'} + (V_{1} + V_{2})u^{'}  = f$
such that $\int_{|x|\geq 1} \left|\D u' -i\lambda^{1/2}\frac{x}{|x|}u' \right|^{2}(1+|x|)^{\delta-1} < + \infty$. Finally, by Theorem \ref{unicidad1} we assert that $u'$ obtained above is unique which implies that $u$ and $u'$ must coincide. Hence, from (\ref{un}) it follows that $u_{n_{q}} \to u \quad \textrm{in} \quad (H^{1}_{A})_{loc}(\Rd),$ which contradicts (\ref{u}). Thus $\{u_{n}\}$ converges to $u$ in $(H^{1}_{A})_{loc}(\Rd)$ and the lemma follows.
\end{proof}

Finally, the preceding lemma together with the uniqueness result for (\ref{res}) (Theorem \ref{unicidad1}) allows us to construct the unique solution $u=u(\lambda, f)$ as the limit of a sequence of solutions $\{u_{n}= u(\lambda+i\varepsilon_{n}, f)\}$ ($\varepsilon_{n} \to 0$) obtained above.

\begin{thm}[Limiting absorption principle]\label{LAP}
Under the hypotheses of Theorem \ref{d>30}, let $\{\varepsilon_{n}\} \subset (0,1)$ be a sequence tending to $0$. Let $u_{n}=u(\lambda+i\varepsilon_{n}, f)$. Then $\{u_{n}\}$ converges in $(H^{1}_{A})_{loc}(\Rd)$ to a $u$ such that
\begin{equation}
\lambda|||u|||_{1}^{2} + |||\D u|||_{1}^{2} \leq C(N_{1}(f))^{2},
\end{equation}
where $C=C(\lambda_{0})>0$ and solves $(L+\lambda)u=f.$

The limit $u=u(\lambda, f)$ is independent of the choice of the sequence $\{\varepsilon_{n}\}$ and is determined as the unique solution of the equation (\ref{res}) that satisfies the Sommerfeld radiation condition
$$
\int_{|x|\geq 1}(1+|x|)^{\delta-1} \left|\D u - i\lambda^{1/2}\frac{x}{|x|}u \right|^{2} \leq C\int (1+|x|)^{1+\delta}|f|^{2}, 
$$
for any $0<\delta < 2$, being $C=C(\lambda_{0})>0$.
\end{thm}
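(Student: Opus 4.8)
The plan is to assemble the estimates and the compactness/uniqueness results established in the previous subsections; essentially no new analysis is needed. First I would note that, by the self-adjointness of $L$ recalled in the Introduction, each $u_{n}=u(\lambda+i\varepsilon_{n},f)$ is the unique element of $H^{1}_{A}(\Rd)$ solving $(L+\lambda+i\varepsilon_{n})u_{n}=f$, so Theorems \ref{landagrande0} and \ref{3dlandagrande0}, Proposition \ref{propositionradiation}, and Proposition \ref{proposition} all apply to it. Since $N_{1}(f)\le C\Vert f\Vert_{L^{2}_{(1+\delta)/2}}<\infty$ and $\varepsilon_{n}\in(0,1)$, combining Theorem \ref{landagrande0} (respectively Theorem \ref{3dlandagrande0} when $d=3$) for $\lambda\ge\lambda_{1}$ with Proposition \ref{proposition} for $\lambda\in[\lambda_{0},\lambda_{1}]$ produces a constant $C=C(\lambda_{0})>0$, independent of $n$, such that $\lambda|||u_{n}|||_{1}^{2}+|||\D u_{n}|||_{1}^{2}\le C(N_{1}(f))^{2}$ for all $n$. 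Feeding this uniform bound (together with $N_{1}(f)\le C\Vert f\Vert_{L^{2}_{(1+\delta)/2}}$) into Proposition \ref{propositionradiation} gives, again uniformly in $n$, the radiation bound $\int_{|x|\ge1}|\D u_{n}-i\lambda^{1/2}\tfrac{x}{|x|}u_{n}|^{2}(1+|x|)^{\delta-1}\le C\int(1+|x|)^{1+\delta}|f|^{2}$ in the range $0<\delta<2$, $\delta<\mu$.

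Next I would invoke Lemma \ref{Lemma1.11}. All of its hypotheses are met: the local finiteness of $\int_{|x|\le\rho}(\lambda|u_{n}|^{2}+|\D u_{n}|^{2})$ is automatic because $u_{n}\in H^{1}_{A}(\Rd)$; $\varepsilon_{n}\to0$; $u_{n}$ solves the $i\varepsilon_{n}$-equation; the radiation condition holds for each $n$ by the previous step; and the assumptions of Theorem \ref{unicidad1} together with (\ref{extracondition}) are part of the hypotheses of Theorem \ref{d>30}. Hence $\{u_{n}\}$ converges strongly in $(H^{1}_{A})_{loc}$ to a limit $u$ that solves $(L+\lambda)u=f$, satisfies $\int_{|x|\le\rho}(\lambda|u|^{2}+|\D u|^{2})<\infty$ for every $\rho$, and for which $\D u_{n}-i\lambda^{1/2}\tfrac{x}{|x|}u_{n}\to\D u-i\lambda^{1/2}\tfrac{x}{|x|}u$ in $L^{2}_{loc}(\{|x|\ge1\})$.

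I would then pass the uniform estimates to the limit. For each fixed $R>1$, the $L^{2}_{loc}$ convergences $u_{n}\to u$ and $\D u_{n}\to\D u$ give $\tfrac1R\int_{|x|\le R}(\lambda|u|^{2}+|\D u|^{2})=\lim_{n}\tfrac1R\int_{|x|\le R}(\lambda|u_{n}|^{2}+|\D u_{n}|^{2})\le C(N_{1}(f))^{2}$, and taking the supremum over $R>1$ yields $\lambda|||u|||_{1}^{2}+|||\D u|||_{1}^{2}\le C(N_{1}(f))^{2}$; the same lower-semicontinuity/Fatou argument applied to $\D u_{n}-i\lambda^{1/2}\tfrac{x}{|x|}u_{n}$ on $\{|x|\ge1\}$ delivers the Sommerfeld bound $\int_{|x|\ge1}(1+|x|)^{\delta-1}|\D u-i\lambda^{1/2}\tfrac{x}{|x|}u|^{2}\le C\int(1+|x|)^{1+\delta}|f|^{2}$. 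For independence of the sequence and for the characterization, suppose two null sequences $\{\varepsilon_{n}\},\{\varepsilon_{n}'\}\subset(0,1)$ produce limits $u,u'$; then $w=u-u'$ solves the homogeneous equation (\ref{homogeneo}) and, by the triangle inequality in $L^{2}_{(\delta-1)/2}(\{|x|\ge1\})$, satisfies the weighted radiation integrability, so the second part of Theorem \ref{unicidad1} forces (\ref{1.130}) for $w$ and its first part gives $w\equiv0$, that is, $u=u'$. The identical argument shows that any solution of (\ref{res}) satisfying the Sommerfeld condition must coincide with $u$.

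The substantive difficulties here are all contained in the results being quoted: the delicate uniqueness Theorem \ref{unicidad1}, which rests on the Morawetz-type multiplier identities at infinity and Regbaoui's unique continuation near the origin, and the compactness argument of Proposition \ref{proposition}, whose contradiction step uses the elementary uniqueness of the $\varepsilon>0$ resolvent equation together with Rellich--Kondrachov. Granting those, the only point in the present proof requiring any care is bookkeeping: checking that every constant is genuinely independent of $n$ --- in particular that the factors $(1+\varepsilon_{n})$ are harmless because $\varepsilon_{n}<1$ --- and that the Morrey--Campanato norm $|||\cdot|||_{1}$ and the weighted radiation norm are lower semicontinuous under $L^{2}_{loc}$ convergence, which is immediate from their definitions.
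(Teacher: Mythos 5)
Your proof is correct and follows essentially the same route as the paper: apply the uniform estimates from Theorems \ref{landagrande0}/\ref{3dlandagrande0}, Propositions \ref{propositionradiation} and \ref{proposition} to each $u_n$, invoke Lemma \ref{Lemma1.11} to extract the strong $(H^1_A)_{loc}$ limit solving $(L+\lambda)u=f$, pass the bounds to the limit, and conclude uniqueness and sequence-independence from Theorem \ref{unicidad1}. The only difference is that you spell out explicitly the lower-semicontinuity/Fatou step and the triangle-inequality argument for $w=u-u'$, which the paper compresses into ``it is easy to check''; the substance is identical.
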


\begin{proof}
Let $f\in L^{2}_{\frac{1+\delta}{2}}(\Rd)\cap L^{2}_{\delta}(\Rd)$. Take $\{\varepsilon_{n}\} \subset (0,1)$ such that $\varepsilon_{n} \to 0$ as $n\to \infty$. We know that there exists a unique solution $u_{n}\in H^{1}_{A}(\Rd)$ of the equation $(L+\lambda+i\varepsilon_{n})u_{n} = f$ satisfying
\begin{align}
& \lambda|||u_{n}|||_{1}^{2} + |||\D u_{n}|||_{1}^{2} \leq C(\varepsilon_{n} + 1)(N_{1}(f))^{2}\notag\\
& \Vert \mathcal{D} u_{n}\Vert_{L^{2}_{\frac{\delta-1}{2}}(E_{1})} \leq C\left[ (1+\varepsilon_{n})\Vert f\Vert_{\frac{1+\delta}{2}}^{2} + \varepsilon_{n} \Vert f\Vert_{\delta}^{2} \right]\notag
\end{align}
for all $n=1, 2,\ldots$ where $\mathcal{D} u = \D u - i\lambda^{1/2}\frac{x}{|x|}u$ and $E_{1}=\{|x|\geq 1\}$. Then one can see from Lemma \ref{Lemma1.11} that $\{u_{n}\}$ has a strong limit in $(H^{1}_{A})_{loc}(\Rd)$ which is a solution of the equation $(L+\lambda)u=f$ and it is easy to check that satisfies
\begin{align}
& \lambda|||u|||_{1}^{2} + |||\D u|||_{1}^{2} \leq C(N_{1}(f))^{2}\notag\\
& \Vert \mathcal{D} u\Vert_{L^{2}_{\frac{\delta-1}{2}}(E_{1})} \leq C\Vert f\Vert_{\frac{1+\delta}{2}}^{2}.\label{somerradiazio}
\end{align}
By the uniqueness result (see Theorem \ref{unicidad1}), it follows that the $u$ obtained above is a unique solution of $(L+\lambda)u=f$ satisfying $(\ref{somerradiazio})$ and the proof is complete.
\end{proof}

\section{Proof of Theorem \ref{unicidad1}}\label{unic}
The proof is based on multiplier method and integration by parts. It will be divided into three steps.

Let $R>2r_{0}\geq 1$, $r_{0}$ being as in Assumption \ref{ass1}, which can be taken as $r_{0}=1$. Our first goal is to show that there exists $\mu>0$ such that
\begin{equation}\label{1.100}
\int_{|x| > R} (|\D u|^{2} + |u|^{2}) \leq \frac{C}{R^{1+\mu}} \int_{\frac{R}{2} \leq |x| \leq R} |u|^{2}.
\end{equation}
For this purpose, we multiply the equation (\ref{homogeneo}) by 
\begin{equation}\label{symanti}
\nabla\psi \cdot \overline{\D u} + \frac{1}{2} \Delta\psi\bar{u} + \varphi\bar{u},
\end{equation}
where $\psi, \varphi$ are regular radial real-valued functions, and we integrate it over the ball $\{|x|<R_{1}\}$ with $R_{1}>R$, obtaining 
\begin{align}\label{(4.310)}
&\int_{|x| < R_{1}} \D u\cdot D^{2}\psi \cdot \overline{\D u} -\int_{|x|<R_{1}}\varphi|\D u|^{2} + \int_{|x|<R_{1}} \varphi \lambda |u|^{2}\\
& =\frac{1}{4}\int_{|x| < R_{1}}( \Delta^{2}\psi -2\Delta\varphi) |u|^{2} - \int_{|x|<R_{1}}\varphi V_{1}|u|^{2} - \int_{|x|<R_{1}}\varphi V_{2}|u|^{2}\notag\\
& - \frac{1}{2}\int_{|x| < R_{1}} \psi' \partial_{r} V_{1} |u|^{2} + \Im \int_{|x| < R_{1}} \psi' B_{\tau}\cdot \D u \bar{u}\notag\\
& + \frac{1}{2}\int_{|x| < R_{1}} V_{2}\Delta\psi|u|^{2} + \Re \int_{|x|<R_{1}}V_{2}\nabla\psi\cdot \D u\bar{u} -\Re\int_{|x|=R_{1}} \D^{r} u\varphi\bar{u} \notag\\
& + \frac{1}{4} \int_{|x|=R_{1}} \left( \nabla(\Delta\psi) -2\nabla\varphi\right) \cdot\frac{x}{|x|} |u|^{2}  + \frac{1}{2}\Re \int_{|x|=R_{1}}  \overline{\D^{r} u} \Delta\psi u  \notag\\
& - \frac{1}{2}\int_{|x|=R_{1}} \frac{x}{|x|} \cdot\nabla\psi |\D u|^{2} + \frac{1}{2} \int_{|x|=R_{1}} (\lambda +V_{1})\frac{x}{|x|}\cdot\nabla\psi|u|^{2}.\notag
\end{align}
Let us consider a cut off function $\theta$ with 
\begin{displaymath}
\theta(r) = \left\{ \begin{array}{ll}
1 & \textrm{if $r \geq 1$}\\
0 & \textrm{if $r < \frac{1}{2},$}
\end{array} \right.
\end{displaymath}
\noindent
$\theta' \geq 0$ for all $r$, and set  $\theta_{R}(x) = \theta \left(\frac{|x|}{R} \right)$. Then, for $R$ such that $\frac{R}{2} >r_{0}\geq1$ and $R<R_{1}$ we define the multiplier $\psi$ such that
\begin{equation}\label{psi0}
\nabla\psi(x)= \frac{x}{R}\theta_{R}(x)
\end{equation}
and $\varphi$ by
\begin{equation}\label{phi0}
\varphi(x)=\frac{1}{2R}\theta_{R}(x).
\end{equation}
Let us insert (\ref{psi0}) and (\ref{phi0}) into the identity (\ref{(4.310)}). Hence, by (\ref{hessianoa}) the left-hand side can be lower bounded by
\begin{align}\label{ezkerra}
 \int_{|x| < R_{1}} &\D u\cdot D^{2}\psi \cdot \overline{\D u} -\int_{|x|<R_{1}}\varphi|\D u|^{2} +\int_{|x|<R_{1}} \varphi \lambda |u|^{2} \\
&> \frac{1}{2R}\int_{|x|<R_{1}} \left(|\D u|^{2} + \lambda |u|^{2}\right)\theta_{R}.\notag
\end{align}

Regarding to the right-hand side of (\ref{(4.310)}), first note that
\begin{equation}
\frac{1}{4}\int_{|x| < R_{1}} (\Delta^{2}\psi - 2\Delta\varphi) |u|^{2} \leq \frac{C}{R^{3}}\int_{\frac{R}{2}<|x|<R} |u|^{2}.\notag
\end{equation}
In order to analyze the terms containing the potentials, here and subsequently, we will use $\eta=\eta(R)$ to denote a positive constant depending on $R$ that tends to $0$ as $R$ tends to infinity. Thus by (\ref{condicionf0}) and the Cauchy-Schwarz inequality we have
\begin{align}
\Im \int_{|x|<R_{1}} \psi' B_{\tau} \cdot \D u\bar{u} & \leq \int_{|x|<R_{1}} \frac{|B_{\tau}||x|}{R}|u||\D u|\notag\\
& \leq \sum_{j=j_{1}}^{j_{2}}2^{-j\mu}\int_{|x|<R_{1}}\theta_{R}|u||\D u|\notag\\
& \leq \eta(R)\int_{|x|<R_{1}} (|u|^{2} + |\D u|^{2})\theta_{R}.\notag
\end{align}
Similarly,
\begin{align}
\Re \int_{|x|<R_{1}} V_{2}\nabla\psi\cdot \D u\bar{u} & \leq \eta(R) \int_{|x|<R_{1}} (|u|^{2}+|\D u|^{2})\theta_{R},\notag
\end{align}
\begin{align}
-\int_{|x|<R_{1}} \left(\frac{\psi' \partial_{r}V_{1}}{2} +\varphi V_{1} \right)|u|^{2} & \leq \eta(R)\int_{|x|<R_{1}} |u|^{2}\theta_{R}.\notag
\end{align}
Finally, since $supp\, \theta'_{R} \subset \{\frac{R}{2} < |x| < R\}$, yields
\begin{align}
\int_{|x|<R_{1}} \left(\frac{\Delta\psi}{2} -\varphi \right) V_{2}|u|^{2} &  \leq \eta(R) \int_{|x|<R_{1}}|u|^{2}\theta_{R}\notag\\
&  + \frac{c}{2R^{2+\mu}} \int_{\frac{R}{2} <|x|<R} |u|^{2}.\notag
\end{align}
Let us analyze now the surface integrals of the equality (\ref{(4.310)}). An easy computation shows that by (\ref{psi0}), (\ref{phi0}) and condition (\ref{condicionf0}) applying to $V_{1}$, the boundary terms are upper bounded by
\begin{align}\label{esferas}
\frac{1}{R}\int_{|x|=R_{1}} |u||\D^{r}u| +\frac{1}{2}\int_{|x|=R_{1}} (|\D u|^{2} + \lambda|u|^{2})  +\frac{1}{2R_{1}^{\mu}} \int_{|x|=R_{1}}|u|^{2}.
\end{align}
As a consequence, from (\ref{ezkerra})-(\ref{esferas}) yields
\begin{align}
\frac{1}{2R}\int_{|x|<R_{1}}(|\D u|^{2} + \lambda|u|^{2})\theta_{R}  & \leq  \eta(R)\int_{|x|<R_{1}} (|u|^{2} + |\D u|^{2})\theta_{R}\notag\\
& + \frac{C}{R^{2+\mu}}\int_{\frac{R}{2}<|x|<R}|u|^{2}\notag\\
& + C(\lambda_{0})\int_{|x|=R_{1}} \left\{ |\D u|^{2} + \lambda |u|^{2})\right\} .\notag
\end{align}

Now, taking $R$ large enough such that
\begin{equation}\notag
\frac{\min(1,\lambda)}{2} - \eta(R)  > 0,
\end{equation}
it follows that
\begin{align}
\frac{1}{R}\int_{R<|x|<R_{1}} (|\D u|^{2}+|u|^{2})  & \leq \frac{C}{R^{2+\mu}} \int_{\frac{R}{2}<|x|<R} |u|^{2} + C\int_{S_{R_{1}}}(|\D u|^{2}+ \lambda|u|^{2}).\notag
\end{align}
Letting $R_{1}\to \infty$ in the above inequality, by (\ref{1.130}) we get (\ref{1.100}), which is our claim.
\bigskip

Our next step is to prove that for $R > 2r_{0} \geq 1$ and any $m\geq 0$, then
\begin{equation}\label{0.6}
\int_{|x| > R} |x|^{m} (|\D u|^{2} + |u|^{2}) < +\infty.
\end{equation}
We do it by induction. Let $\gamma = 1+\mu$ and first note that from the first step one can easily deduce that for any $R\geq 1$ holds
\begin{align}
\int_{|x|\geq 2R} |x|^{\gamma} &(|u|^{2}+|\D u|^{2}) \leq \sum_{j\geq J}(2^{j\gamma}) \int_{2^{j-1}\leq |x|\leq 2^{j}} (|u|^{2}+|\D u|^{2})\notag\\
& \leq C \sum_{j\geq J} \int_{2^{j-2}\leq |x|\leq 2^{j-1}} |u|^{2}\leq C\int_{|x|\geq R} (|u|^{2} + |\D u|^{2})\notag\\
& \leq \frac{C}{R^{\gamma}} \int_{\frac{R}{2}\leq |x|\leq R}|u|^{2},\notag
\end{align}
being $J$ such that $2^{J-1}\leq 2R \leq 2^{J}$. The same conclusion can be drawn for any $m\geq 0$. Indeed, assuming that
\begin{equation}\label{mntzat}
\int_{|x|\geq R} |x|^{m}(|u|^{2} + |\D u|^{2}) \leq \frac{C}{R^{1+\mu}}\int_{\frac{R}{2}\leq |x| \leq R} |u|^{2},
\end{equation}
it follows that (\ref{mntzat}) is true when $m$ is replaced by $m+\gamma$. Thus we obtain (\ref{0.6}).

We next claim the exponential decay. Let us multiply again the equation (\ref{homogeneo}) by (\ref{symanti}), but instead of integrating over a ball, we do it over the whole $\Rd$. Note that this is equivalent to adding the identities (\ref{(4.11)}) and (\ref{(4.3)}) with $f=0$. Thus we get the identity (\ref{batuketa}) with the right-hand side equals to $0$. Let us now choose the multipliers as
$$\nabla\psi(x)= |x|^{m+1}\frac{x}{|x|}\theta_{R}(x),$$
$$\varphi(x) = \frac{1}{2}|x|^{m}\theta_{R}(x),$$
for $R\geq 2r_{0}\geq 1$, $m\geq 1$ and $\theta_{R}$ being as above. 
\\
For simplicity of notation, we continue to write $\eta=\eta(R)$ for a function depending on $R$ such that $\eta(R) \to 0$ as $R\to \infty$. Thus analysis similar to that in the first step shows that taking $R$ large enough such that
\begin{equation}\notag
\frac{\min\{1,\lambda\}}{2} - \eta(R)>0,
\end{equation}
we get
\begin{align}
\int |x|^{m} (|\D u|^{2} + |u|^{2})\theta_{R} & \leq  \int \left( \eta(R)m|x|^{m-1} + Cm^{3}|x|^{m-2}\right)|u|^{2}\theta_{R}\notag\\
& + \left(\frac{Cm^{2}}{R^{2}} + \frac{c}{2R^{1+\mu}}\right)\int_{\frac{R}{2}<|x|<R}|x|^{m}|u|^{2}.\notag
\end{align}
Let us take now $m = \delta l$ with $0<\delta<2/3$ and multiply both sides of the above inequality by $\frac{t^{l}}{l!}$, $t \geq 1$ and $l \geq 3$. Making the sum with respect to l from $3$ to $\infty$ we have
\begin{align}
&\left(1- \frac{2t}{3}R^{\delta-1}\eta(R)-\frac{9}{2}R^{3\delta-2}t^{3}\right)\int e^{|x|^{\delta}t}(|\D u|^{2} + |u|^{2})\theta_{R}\notag \\ 
&  \leq \int (|\D u|^{2}+|u|^{2})\left(1+t|x|^{\delta} + \frac{t^{2}}{2}|x|^{2\delta} \right)\theta_{R}\notag\\
& + \left(CR^{2(\delta-1)}t^{2} + \frac{c}{2R^{1+\mu}}\right)\int_{\frac{R}{2}<|x|<R}e^{|x|^{\delta}t}|u|^{2}.\notag
\end{align}
Fix $t\geq 1$ and $0<\delta<\frac{2}{3}$. Then, for sufficiently large $R=R(t)$ such that
$$
\frac{2t}{3}R^{\delta-1}\eta(R) + \frac{9}{2}t^{3}R^{3\delta-2} < 1,
$$
by (\ref{0.6}) we conclude that 
\begin{equation}\notag
\int_{|x|>R} e^{|x|^{\delta}t} (|\D u|^{2}+|u|^{2}) <+\infty.
\end{equation}
Therefore,
\begin{equation}\notag
\int e^{|x|^{\delta}t} (|\D u|^{2}+|u|^{2}) <+\infty
\end{equation}

We are now in a position to show that $u=0$ almost everywhere in $\{ |x|>2R \}$. Set $v=e^{t|x|^{\delta}/2}u$ with $t \geq 1$ and $0< \delta < 2/3$. Then, by a direct computation $v$ satisfies the equation
\begin{align}\label{3.9}
&\D^{2} v +[\lambda+V_{1} + V_{2}]v -\delta t|x|^{\delta-1}\frac{x}{|x|}\cdot \D v\\
& +\left[\frac{\delta^{2}t^{2}|x|^{2(\delta-1)}}{4}-\frac{\delta(\delta +d-2)t|x|^{\delta-2}}{2}\right]v=0.\notag
\end{align}
We multiply (\ref{3.9}) by 
$$
|x|\frac{x}{|x|}\cdot\overline{\D v}+\frac{d-1}{2}\bar{v}
$$ 
(the combination of the symmetric and the antisymmetric multipliers, (\ref{symanti}) with \hbox{$\nabla\psi=x$} and \hbox{$\varphi=-1/2$}), integrate it over $\{|x|>R\}$ for some $R>2r_{0}$ and take the real part. Hence, it follows that
\begin{align}
& \frac{\min\{1,\lambda\}}{2}\int_{|x|>R}(|\D v|^{2} +|v|^{2}) +\frac{(2\delta-1)\delta^{2}t^{2}}{4}\int_{|x|>R}|x|^{2\delta-2}|v|^{2}\notag\\
& + \delta t\int_{|x|>R}|x|^{\delta}\Big|\D^{r} v \Big|^{2} \leq \frac{\delta t(d+\delta-2)}{2}\left(\frac{3d-5}{2}+\delta \right)\int_{|x|>R}|x|^{\delta-2}|v|^{2}\notag\\
& + \eta(R)\int_{|x|>R}(|v|^{2}+|\nabla_{A}v|^{2})+\frac{1}{2}\int_{S _{R}}\lambda|x||v|^{2}\notag\\
& +\left(\frac{d-1}{4} + \frac{R}{2} +\eta(R) + \frac{\delta^{2}t^{2}R^{2\delta-1}}{8} \right)\int_{S_{R}}(|v|^2+|\D v|^{2}).\notag
\end{align}
Consequently, combining the right-hand side of the above inequality with the left-hand side, for $R$ large enough and for any $t\geq 1$, $0<\delta<2/3$, $\lambda \geq \lambda_{0}$, it follows that
\begin{equation}\notag
\int_{|x| \geq R} |v|^{2}  \leq C_{\delta}\left(t^{2}+R(1+\lambda)\right) \int_{S_{R}} (|v|^{2}+|\D v|^{2}),
\end{equation}
which implies
\begin{equation}\notag
\int_{|x|>2R} |u|^{2}  \leq C_{\delta}e^{-tR^{\delta}}\left(1+\lambda+\frac{t^{2}}{R}\right),
\end{equation}
being $C_{\delta}$ independent of $t$. Thus letting $t \to \infty$, we obtain that $u=0$ almost everywhere in $\{|x|>2R\}$. The unique continuation property (\cite{R}) implies then $u=0$ almost everywhere in $\Rd$. 

Finally assume that the Sommerfeld radiation condition (\ref{radiacion0}) holds. Moreover, observe that solutions of (\ref{homogeneo}) satisfy (just multiply by $\bar{u}$ and integrate over a ball of radius R), 
$$
\Im \int_{|x|=R} \frac{x}{|x|}\cdot \D u\bar{u} =0.
$$
Hence, we have 
$$\int_{|x|=R} (|\D u|^{2} + \lambda|u|^{2}) d\sigma(x) = \int_{|x|=R} \left|\D u - i\lambda^{1/2}\frac{x}{|x|}u\right|^{2}d\sigma(x) ,$$
\noindent
which together with (\ref{radiacion0}) establishes (\ref{1.130}). The proof of the theorem is complete.

\section{Appendix}

Our proofs combine three integral identities that are obtained by the standard technique of Morawetz multipliers, using integration by parts (see \cite{F}, Lemma 2.1. and \cite{PV1}, Lemma 2.1.). We remark that the idea of integrating by parts with the covariant form $\D$ is to use the Leibnitz formula
\begin{equation}\label{Leibnitz}
\D(f g) = (\D f)g + f(\nabla g),
\end{equation}
putting all the dissorted derivatives on the solution and the straight derivatives on the multiplier.

In order to carry out the integration by parts argument below, we need some regularity in the solution $u$. In general, it is enough to know that $u\in H^{1}_{A}(\Rd)$. Moreover, since we are including singularities in our potentials, it is necessary to put some restrictions on them to check that the contributions of these terms make sense. To this end, it would be enough to check that
\begin{align}\notag
& \int (\partial_{r}V_{1}) |u|^{2} + \int (V_{1} + V_{2}) |u|^{2} +  \int |x|^{2}|B_{\tau}|^{2}|u|^{2} < \infty,
\end{align}
which is true for our potentials by the magnetic Hardy inequality
\begin{equation}\label{A.1}
\int \frac{|u|^{2}}{|x|^{2}} \leq \frac{4}{(d-2)^{2}} \int |\D u|^{2},
\end{equation}
that holds for any $u\in H^{1}_{A}(\Rd)$ with $d\geq 3$.

Now we are ready to state the key equalities.

\begin{lem}\label{appendix1} Let $\varphi: \Rd \to \R$ be regular enough. Then, the solution $u\in H^{1}_{A}(\Rd)$ of the Helmholtz equation (\ref{2.1}) satisfies
\begin{align}\label{(4.11)}
& \int  \varphi\lambda |u|^{2} - \int \varphi |\D u|^{2}  + \int \varphi (V_{1} + V_{2}) |u|^{2}- \Re \int \nabla \varphi \cdot \D u \bar{u}=  \Re\int  \varphi f\bar{u},
\end{align}
\begin{equation}
\varepsilon \int \varphi |u|^{2} - \Im \int \nabla \varphi\cdot \D u\bar{u} = \Im\int \varphi f\bar{u}.\label{(4.2)}
\end{equation}
\end{lem}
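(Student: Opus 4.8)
The plan is to derive both identities simultaneously by feeding a single multiplier, namely $\varphi u$, into the weak formulation of (\ref{2.1}) and then splitting into real and imaginary parts. Concretely, since $u=u_{\varepsilon}\in H^{1}_{A}(\Rd)$ solves (\ref{2.1}) (in the form sense for the self-adjoint operator $L$), multiplying the equation by $\bar\phi$ and integrating by parts the covariant Laplacian gives, for every admissible test function $\phi$,
\[
-\int \D u\cdot\overline{\D\phi}\,dx+\int (V_{1}+V_{2})u\bar\phi\,dx+(\lambda+i\varepsilon)\int u\bar\phi\,dx=\int f\bar\phi\,dx .
\]
Here the integration by parts is the identity $\int \bar\phi\,\D\cdot(\D u)=-\int \overline{\D\phi}\cdot \D u$, which is immediate from $\D\cdot w=\nabla\cdot w+iA\cdot w$ together with the ordinary divergence theorem, and which has no boundary contribution because the functions lie in $L^{2}(\Rd)$ with $L^{2}$ magnetic gradients.

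Next I would substitute $\phi=\varphi u$. As $\varphi$ is real-valued, the Leibniz rule (\ref{Leibnitz}) yields $\D\phi=\varphi\,\D u+u\,\nabla\varphi$, hence $\overline{\D\phi}=\varphi\,\overline{\D u}+\bar u\,\nabla\varphi$ and $\int \D u\cdot\overline{\D\phi}=\int\varphi|\D u|^{2}+\int\nabla\varphi\cdot\D u\,\bar u$. Since moreover $\int u\bar\phi=\int\varphi|u|^{2}$, $\int f\bar\phi=\int\varphi f\bar u$ and $\int(V_{1}+V_{2})u\bar\phi=\int\varphi(V_{1}+V_{2})|u|^{2}$, the displayed identity becomes
\[
-\int\varphi|\D u|^{2}-\int\nabla\varphi\cdot\D u\,\bar u+\int\varphi(V_{1}+V_{2})|u|^{2}+(\lambda+i\varepsilon)\int\varphi|u|^{2}=\int\varphi f\bar u .
\]
Taking the real part (the $\lambda$ term stays, the $\varepsilon$ term drops, since $\int\varphi|u|^{2}$ is real) gives exactly (\ref{(4.11)}), and taking the imaginary part ($\lambda$ drops, $\varepsilon\int\varphi|u|^{2}$ survives) gives exactly (\ref{(4.2)}).

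What remains, and is the only genuinely technical point, is to justify that $\phi=\varphi u$ is an admissible test function and that every integral written above is finite. When $\varphi$ is bounded with bounded gradient this is automatic, since $u\in H^{1}_{A}(\Rd)$ forces $\varphi u\in H^{1}_{A}(\Rd)$, while the possibly singular term $\int|V_{1}+V_{2}||u|^{2}$ is controlled using that $u$ lies in the form domain of $L$ together with the magnetic Hardy inequality (\ref{A.1}); this is precisely the finiteness condition recorded at the start of the Appendix. For the unbounded radial weights $\varphi$ that are used later, one first applies the identity with $\varphi$ replaced by $\varphi\chi_{R}$, where $\chi_{R}$ is a smooth cutoff supported in $\{|x|\le R\}$, and then lets $R\to\infty$; the main obstacle here is to show that the extra terms carrying $\nabla\chi_{R}$ tend to $0$, which is where the a priori decay/integrability of $u$ (available for the $\varepsilon>0$ solutions, and established beforehand in the unique-continuation argument of Theorem \ref{unicidad1}) is used. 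Everything else is a direct computation.
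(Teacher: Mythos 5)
Your proposal is correct, and it is precisely the argument the paper has in mind: the lemma is obtained by testing the weak form of (\ref{2.1}) against $\varphi\bar{u}$ (the ``symmetric multiplier'' $\varphi\bar u$ from (\ref{symanti})), expanding $\D(\varphi u)$ via the Leibniz rule (\ref{Leibnitz}) with the twisted derivative landing on $u$ and the ordinary gradient on $\varphi$, and then separating real and imaginary parts. Your remarks on admissibility (form-domain regularity, the Hardy inequality (\ref{A.1}), and a cutoff $\chi_{R}\to1$ for unbounded radial weights) are exactly the finiteness considerations the Appendix records before stating the identities.
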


\begin{remark}
Note that if we take $\varphi =1$, then we obtain the following a priori estimates 
\begin{align}\label{a+++0}
\varepsilon \int  |u|^{2} \leq \int |f| |u|
\end{align}
\begin{align}\label{b+++0}
\int |\D u|^{2} & \leq \int (\lambda + V_{1} + V_{2})|u|^{2}  + \int |f|||u|,
\end{align}
that have been very useful throughout the paper.
\end{remark}

\begin{lem}\label{appendix2}
Let $\psi: \mathbb{R}^d \longmapsto \mathbb{R}$ be radial, regular enough. Then, any solution $u\in H^{1}_{A}(\Rd)$ of the equation (\ref{2.1}) satisfies

\begin{align}\label{(4.3)}
&\int \D u\cdot D^{2} \psi \cdot \overline{\D u} +\Re \frac{1}{2}\int \nabla(\Delta \psi)\cdot \D u \bar{u}+ \varepsilon \Im\int \nabla \psi\cdot \overline{\D u}u \\
& - \Im  \int \psi' B_{\tau} \cdot \D u\bar{u} -\frac{1}{2}\int \Delta\psi V_{2} |u|^{2} -\Re\int V_{2}\nabla\psi\cdot \D u\bar{u} \notag\\
& + \frac{1}{2} \int \psi' \partial_{r}V_{1} |u|^{2} = -\Re \int f\nabla \psi \cdot \overline{\D u} -\frac{1}{2}\Re\int f \Delta\psi\bar{u},\notag
\end{align}
where $D^{2} \psi$ denotes the Hessian of $ \psi$.
\end{lem}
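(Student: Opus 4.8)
The final statement to prove is Lemma \ref{appendix2}, the Morawetz-type integral identity for the radial multiplier $\psi$.

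\medskip

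The plan is to derive the identity by multiplying the equation (\ref{2.1}) by the covariant multiplier $\nabla\psi\cdot\overline{\D u} + \tfrac12\Delta\psi\,\bar u$, integrating over $\Rd$, and taking real parts, exactly in the spirit of the classical Morawetz argument but carried out with the covariant derivative $\D = \nabla + iA$ in place of $\nabla$. First I would write $\D^2 u = f - (V_1+V_2)u - \lambda u - i\varepsilon u$ from (\ref{2.1}) and pair it with $\nabla\psi\cdot\overline{\D u}$. The key algebraic tool is the covariant Leibnitz rule (\ref{Leibnitz}), which lets me integrate by parts moving the covariant derivatives onto $u$ and keeping ordinary derivatives on the radial weight $\psi$. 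The term $\Re\int \D^2 u\,\overline{\nabla\psi\cdot\D u}$ produces, after one integration by parts, the Hessian term $\int \D u\cdot D^2\psi\cdot\overline{\D u}$ together with boundary-free bulk terms; here the commutator of the two covariant derivatives is precisely where the magnetic field enters. Specifically, $[\D_j,\D_k] = i(\partial_j A_k - \partial_k A_j) = iB_{kj}$, and contracting against $x_k/|x|$ (which appears because $\nabla\psi = \psi'\,x/|x|$ is radial) produces exactly the trapping component $B_\tau$, yielding the term $-\Im\int\psi' B_\tau\cdot\D u\,\bar u$. The second piece of the multiplier, $\tfrac12\Delta\psi\,\bar u$, contributes (after integrating by parts twice) the term $\tfrac12\Re\int\nabla(\Delta\psi)\cdot\D u\,\bar u$ and combines with the potential and $\varepsilon$ contributions.

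\medskip

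Next I would collect the lower-order terms. Multiplying the $\lambda u$ and $i\varepsilon u$ terms by the multiplier and taking real parts gives, respectively, a term that integrates by parts to a total derivative of $\psi$ against $\lambda|u|^2$ (which cancels, since $\psi$ is radial and one checks the $\lambda$ contribution vanishes in this particular combination — this is the reason $\lambda$ does not appear in (\ref{(4.3)})) and the term $\varepsilon\,\Im\int\nabla\psi\cdot\overline{\D u}\,u$. For the electric potentials, the $V_2$ part paired with both halves of the multiplier gives $-\tfrac12\int\Delta\psi\,V_2|u|^2 - \Re\int V_2\nabla\psi\cdot\D u\,\bar u$; for $V_1$ one uses that $\nabla\psi\cdot\nabla V_1 = \psi'\,\partial_r V_1$ (radial weight) after an integration by parts that moves the derivative off $|u|^2$ onto $V_1$, producing the single clean term $\tfrac12\int\psi'\,\partial_r V_1|u|^2$ — note $V_1$ is only assumed to have a distributional radial derivative, so this integration by parts must be read in that sense, which is legitimate by the integrability conditions recorded just before the lemma (via the magnetic Hardy inequality (\ref{A.1})). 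The right-hand side $-\Re\int f\nabla\psi\cdot\overline{\D u} - \tfrac12\Re\int f\Delta\psi\,\bar u$ is simply what is left from pairing $f$ with the multiplier.

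\medskip

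The main obstacle I anticipate is twofold. First, the justification of all the integrations by parts: since $u$ is only assumed to lie in $H^1_A(\Rd)$ and the potentials are singular at the origin, one must check that every boundary term at the origin and at infinity vanishes and that each integrand is absolutely integrable — this is exactly what the displayed estimate $\int(\partial_r V_1)|u|^2 + \int(V_1+V_2)|u|^2 + \int|x|^2|B_\tau|^2|u|^2 < \infty$ together with (\ref{A.1}) is designed to secure, but one should really argue by approximation (e.g. truncating $\psi$ near $0$ and near $\infty$ and passing to the limit, or regularizing $u$). Second, the careful bookkeeping of the commutator term so that precisely $-\Im\int\psi'B_\tau\cdot\D u\,\bar u$ emerges with the correct sign and no spurious $\nabla\cdot A$ contribution survives; here one uses that $B_\tau$ is tangential ($B_\tau\cdot x = 0$) so that, as noted in (\ref{magnetictangenbtau}), only $\D^\bot u$ is seen, and the $\Im$ appears because the magnetic field contributes an imaginary factor $i$ while the rest of that term is formally real. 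Once these two points are handled, the identity (\ref{(4.3)}) follows by assembling the pieces; the analogous (and easier) identities (\ref{(4.11)}) and (\ref{(4.2)}) of Lemma \ref{appendix1} come from the simpler scalar multiplier $\varphi\bar u$, taking real and imaginary parts respectively.
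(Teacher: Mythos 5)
Your proposal takes essentially the same route as the paper: multiply the equation by the Morawetz multiplier $\nabla\psi\cdot\overline{\D u}+\tfrac12\Delta\psi\,\bar u$, integrate by parts using the covariant Leibnitz rule (\ref{Leibnitz}) so that ``dissorted'' derivatives stay on $u$ and straight derivatives land on $\psi$, extract the magnetic term from the commutator $[\D_j,\D_k]=iB_{kj}$ (which, contracted against the radial $\nabla\psi$, yields precisely $\psi'B_\tau$), observe the cancellation of the $\lambda$-terms in the real part, and justify the formal manipulations via the integrability conditions recorded just before the lemma together with the magnetic Hardy inequality (\ref{A.1}). This is exactly what the paper does (citing Fanelli, Lemma~2.1 and Perthame--Vega, Lemma~2.1 for the underlying computation), so the proposal is correct and matches the paper's proof.
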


\begin{remark}\label{Btauremark}
The integration by parts gives very precise information about the relevant quantities related to the electromagnetic field. It is of a particular interest the part concerning the magnetic potential $A$. Note that in the above identities only appear the tangential component of the magnetic field, i.e., the quantity $B_{\tau}$. 
\end{remark}

\end{document}